\def\blfootnote{\gdef\@thefnmark{}\@footnotetext}
\def\RCAo{\mathsf{RCA_0}}
\def\WKLo{\mathsf{WKL_0}}
\def\WKL{\mathsf{WKL_0}}
\def\EFA{\mathrm{EFA}}
\def\E{\exists}
\def\A{\forall}
\def\N{\mathbb{N}}
\def\Cod{\mathrm{Cod}}
\def\rest{{\upharpoonright}}
\def\P2{\Pi^1_2}
\def\PHt{\mathrm{PH}^2_2}
\def\RT{\mathrm{RT}}
\def\ADS{\mathrm{ADS}}
\newcommand\EM{\mathrm{EM}}
\def\II{\mathrm{I}\Sigma^0_1}
\newcounter{menum}
{\begin{enumerate}%
\setcounter{enumi}{#1}}%
{\setcounter{menum}{\value{enumi}}\end{enumerate}}
\newtheorem{thm}{Theorem}[section]
\newtheorem{theorem}[thm]{Theorem}
\newtheorem*{theorem*}{Theorem}
\newtheorem*{claim*}{Claim}
\newtheorem{lem}[thm]{Lemma}
\newtheorem{lemma}[thm]{Lemma}
\newtheorem{corollary}[thm]{Corollary}
\newtheorem*{proposition*}{Proposition}
\theoremstyle{definition}
\newtheorem{definition}[thm]{Definition}
\newtheorem*{remark}{Remark}
\definecolor{lightred}{rgb}{1,.60,.60}
\begin{document}

\title{Some upper bounds on ordinal-valued Ramsey numbers \\ for colourings of pairs
}

\author[1]{Leszek Aleksander Ko{\l}odziejczyk}
\author[2]{Keita Yokoyama}

\affil[1]{\small%
Institute of Mathematics 

University of Warsaw

E-mail: \sf{lak@mimuw.edu.pl}}

\affil[2]{\small%
School of Information Science

Japan Advanced Institute of Science and Technology

E-mail:  \sf{y-keita@jaist.ac.jp}}

\date{
November 7, 2018
}

\blfootnote{%
The work of the first author
is partially supported by grant number 2017/27/B/ST1/01951 of the National Science Centre, Poland.

The work of the second author
is partially supported by
JSPS KAKENHI (grant numbers 16K17640 and 15H03634) and JSPS Core-to-Core Program
(A.~Advanced Research Networks),
 and JAIST Research Grant 2018(Houga).
 }

\maketitle

\def\Bexp{\mathrm{B}\Sigma_{1}+\mathrm{exp}}
\newcommand\RF{\mathrm{RF}}
\newcommand\tpl{\mathrm{tpl}}
\newcommand\col{\mathrm{col}}
\newcommand\fin{\mathrm{fin}}
\newcommand\Log{\mathrm{Log}}
\newcommand\Ct{\mathrm{Const}}
\newcommand\It{\mathrm{It}}
\renewcommand\PHt{\widetilde{\mathrm{PH}}{}}
\newcommand\BME{\mathrm{BME}_{*}}
\newcommand\HT{\mathrm{HT}}
\newcommand\Fin{\mathrm{Fin}}
\newcommand\FinHT{\mathrm{FinHT}}
\newcommand\wFinHT{\mathrm{wFinHT}}
\newcommand\FS{\mathrm{FS}}
\newcommand\LL{\mathsf{L}}
\newcommand\GPg{\mathrm{GP}}
\newcommand\GP{\mathrm{GP}^{2}_{2}}
\newcommand\FGPg{\mathrm{FGP}}
\newcommand\FGP{\mathrm{FGP}^{2}_{2}}
\newcommand\SGP{\mathrm{SGP}^{2}_{2}}
\newcommand\Con{\mathrm{Con}}
\newcommand\WF{\mathrm{WF}}
\newcommand\bb{\mathbf{b}}
\renewcommand\WKL{\mathrm{WKL}}
\newcommand\KL{\mathrm{KL}}
\newcommand\sWKL{\Sigma^{0}_{1}\text{-}\mathrm{WKL}}
\newcommand\ext{\mathrm{ext}}
\newcommand\VS{\mathrm{VSMALL}}
\newcommand\WO{\mathrm{WO}}

\newcommand\ome{\omega}
\newcommand\ho{\mathrm{ho}}
\newcommand\MC{\mathrm{MC}}

\begin{abstract}
We study Ramsey's theorem for pairs and two colours in the context of the theory of $\alpha$-large sets introduced by Ketonen and Solovay. We prove that any $2$-colouring of pairs from an $\omega^{300n}$-large set admits an $\omega^n$-large homogeneous set.
We explain how a formalized version of this bound gives a more direct proof, and a strengthening, of the recent result of Patey and Yokoyama [Adv.~Math.~330 (2018), 1034--1070] stating that Ramsey's theorem for pairs and two colours is $\forall\Sigma^0_2$-conservative over the axiomatic theory $\RCAo$ (recursive comprehension).
\end{abstract}

\section*{Introduction}
The work described in this paper is mostly finite combinatorics. Much of the motivation, on the other hand, comes from logic.

We contribute to the quantitative study of Ramsey's theorem for pairs in a setting where the pairs always come from a finite subset of $\N$, but
the size of the subset is given by a countable ordinal rather than just the finite ordinal specifying its cardinality. More concretely, we use the framework of $\alpha$-large sets originally due to Ketonen and Solovay \cite{KS81}, in which, for instance:
\begin{itemize}
\item a set $X \subseteq \N$ is $n$-large, for $n \in \N$, exactly if $X$ has at least $n$ elements,
\item $X$ is $\omega$-large if $X\setminus \{\min X\}$ is $\min X$-large, that is, if $X$ has strictly more than $\min X$ elements,
\item $X$ is $\omega^2$-large if $X\setminus \{\min X\}$ can be split into $\min X$ many sets $X_1,\ldots,X_{\min X}$ such that
$\max X_i < \min X_{i+1}$ and each $X_i$ is $\omega$-large,
\end{itemize} 
and so on (for precise definitions, see below). Our main aim is to obtain a good upper bound on the size of a set $X$ guaranteeing that each $2$-colouring
of $[X]^2$ will have an $\omega^n$-large homogeneous set, for $n \in \N$.

This sort of work can be viewed simply as a specific kind of finite combinatorics: essentially, the study of bounds on Ramsey numbers that happen
to take ordinal values rather than finite ones. Among the papers developing Ramsey theory in the context of $\alpha$-largeness---e.g.~\cite{BK99, BK02, BK06, Weiermann2004, KPW, DW}---many do in fact focus on the purely combinatorial side of things.  However, the original motivation for studying $\alpha$-largeness was the desire to understand the combinatorial underpinnings of (un)provability in strong axiom systems. For example, the seminal work of \cite{KS81} showed that the size of a set needed to guarantee the existence of $\omega$-large homogeneous sets for colourings of $n$-tuples grows extremely fast with $n$. This provided a combinatorial explanation for the unprovability of a statement known as the Paris-Harrington theorem in Peano Arithmetic.

Our work is also inspired by a question from logic. It follows from a general-purpose result on colourings of $n$-tuples \cite[Theorem 5]{BK02} that 
\begin{equation}\label{eqn:bigorajska-kotlarski}\omega^{\omega^n\cdot2} \to (\omega^n)^2_2.\end{equation}
That is, every $2$-colouring of pairs from an $\omega^{\omega^n\cdot2}$-large set has an $\omega^n$-large homogeneous set. It has been known that determining whether this upper bound is more or less tight would have important consequences for a longstanding open problem about the logical strength of \emph{infinite} Ramsey's theorem for pairs (see~e.g.~\cite[Question~4.4]{Seetapun1995strength} or \cite[Question~2]{montalban:questions} for the question and e.g.~\cite{CJS, CSY2014, CSY2017, BW} for some important related work). 
Recently, Patey and the second author \cite{PY} solved that open problem by showing that (\ref{eqn:bigorajska-kotlarski}) is \emph{not} tight. However, the argument in \cite{PY} was non-constructive and required a detour via infinite combinatorics and forcing; as a consequence, it did not give any specific bound.

Our main theorem here is 
\begin{equation}\label{eqn:our-bound}\omega^{300n} \to (\omega^n)^2_2.\end{equation}
This \emph{is} more or less tight, at least in the sense that it is impossible to get the left-hand side down from $\omega^{O(n)}$ to $\omega^{(1+o(1))n}$  \cite{KPW}.
Moreover, our arguments use only relatively basic finite-combinatorial tools, which means that they can be formalized in axiomatic theories of modest strength. In effect, we obtain a new, significantly more direct proof of the main result of \cite{PY}: any simple enough statement provable using infinite Ramsey's theorem for pairs and two colours can also be proved in the axiomatic theory $\RCAo$, which corresponds to a form of ``computable mathematics'' and (unlike infinite Ramsey's theorem) is too weak to imply the existence of any non-computable sets. In fact, we also obtain some improvements of that result, which provide additional information concerning 
the proof-theoretic properties of Ramsey's theorem.

The paper consists of three sections. In Section \ref{sec:general}, we provide the necessary definitions and background. In Section \ref{sec:calculation}, we prove the main theorem. Those two sections involve no logic beyond elementary facts about small infinite ordinals. The connections to logic are explained in Section \ref{sec:formalization}.

\section{$\alpha$-largeness and Ramsey $\alpha$-largeness}\label{sec:general}

We fix a primitive recursive notation for ordinals below $\ome^{\ome}$ by writing them
in \emph{Cantor normal form}: $\alpha=\sum_{i<k}\ome^{n_{i}}$ 
where $n_{i}\in\N$ and $n_{0}\ge\dots\ge n_{k-1}$.

Let $\alpha=\sum_{i<k}\ome^{n_{i}}$ and $\beta=\sum_{i<k'}\ome^{m_{i}}$. We write $\beta\unrhd\alpha$ if $m_{k'-1}\ge n_{0}$.
If $\beta\unrhd\alpha$, we can define the sum of $\beta$ and $\alpha$ as $\beta+\alpha=\sum_{i<k+k'}\ome^{t_{i}}$ where $t_{i}=m_{i}$ for $i<k'$ and $t_{j+k'}=n_{j}$ for $j<k$. In what follows, we only consider sums of this form.
We let $\beta>\alpha$ if there is $i\le k, k'$ such that $n_{j}=m_{j}$ for any $j<i$ and ($n_{i}<m_{i}$ or $i=k<k'$).
By definition, $\beta\unrhd\alpha$ implies $\beta\ge \alpha$.

We write $1$ for $\ome^{0}$, and $\ome^{n}\cdot k$ for $\sum_{i<k}\ome^{n}$.
With this notation, one can write $\alpha<\ome^{\ome}$ as $\alpha=\ome^{n}\cdot k_{n}+\dots+\ome^{0}\cdot k_{0}$, and put $\MC(\alpha)=\max\{k_{n},\dots,k_{0}\}$ ($\MC$ stands for the \emph{maximal coefficient} of $\alpha$).

For a given $\alpha<\ome^{\ome}$ and $m\in \N$, define 
$0[m]=0$, 
$\alpha[m]=\beta$ if $\alpha=\beta+1$, 
and $\alpha[m]=\beta+\ome^{n-1}\cdot m$ if $\alpha=\beta+\ome^{n}$ for some $n\ge 1$.
By definition, $m\le n$ implies $\alpha[m]\le \alpha[n]$.

The following definition combines a fundamental concept from \cite{KS81} with a variant from \cite{PY}. 

\begin{definition}[largeness]
Let $\alpha<\ome^{\ome}$, and let $n,k,m\in\N$.
\begin{enumerate}
 \item A set $X = \{ x_0 < \dots < x_{\ell-1} \}\subseteq_{\fin}\N$ 
is said to be \emph{$\alpha$-large} if $\alpha[x_{0}]\dots[x_{\ell-1}]=0$. 
In other words, any finite set is $0$-large, and $X$ is said to be $\alpha$-large if
\begin{itemize}
 \item $X\setminus \{\min X\}$ is $\beta$-large if $\alpha=\beta+1$,
 \item $X\setminus \{\min X\}$ is $(\beta+\ome^{n-1}\cdot\min X)$-large if $\alpha=\beta+\ome^{n}$.
\end{itemize}
 \item A set $X \subseteq_{\fin}\N$ 
is said to be \emph{$\RT^{n}_{k}$-$\alpha$-large} if for any $P:[X]^{n}\to k$, there exists $Y\subseteq X$ such that $Y$ is $P$-homogeneous and $\alpha$-large.
\end{enumerate}
\end{definition}


The above definition of $\omega^{n}$-largeness causes minor issues if $\min X$ is a very small number -- for instance, the set $\{0\}$ ends up being $\omega^n$-large for every $n$.
To avoid this and simplify the notation, we will always consider finite sets $X\subseteq_{\fin}\N$ satisfying $\min X\ge 3$.
We will first check several basic properties.

\begin{lem}\label{lem:alpha-decreasing}
Let $\alpha,\beta<\ome^{\ome}$ and $m\in\N$.
If $\alpha\le\beta$ and $\MC(\alpha)< m$, then $\alpha[m]\le \beta[m]$.
\end{lem}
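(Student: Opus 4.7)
The plan is to do a direct case analysis on $\beta$ in Cantor normal form; no induction should be needed. The case $\alpha=\beta$ is trivial, so I assume $\alpha<\beta$. I will freely use the monotonicity fact $\alpha[m]\le\alpha$, which is clear from the definition of $[m]$ together with $\omega^{n-1}\cdot m<\omega^{n}$ for $n\ge 1$.

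If $\beta=\gamma+1$, then $\beta[m]=\gamma\ge\alpha\ge\alpha[m]$, and this case goes through using neither $m$ nor $\MC(\alpha)$. If $\beta=\gamma+\omega^{t}$ with $t\ge 1$, so that $\beta[m]=\gamma+\omega^{t-1}\cdot m$, I first dispatch the subcase $\alpha\le\gamma$ via $\alpha[m]\le\alpha\le\gamma\le\beta[m]$. The remaining subcase $\gamma<\alpha<\gamma+\omega^{t}$ is where the real work lies: I plan to write $\alpha=\gamma+\alpha''$ with $0<\alpha''<\omega^{t}$, so every exponent in $\alpha''$ is strictly less than $t$. Then $\gamma\unrhd\alpha''$ and, since $[m]$ modifies only the trailing term, $\alpha[m]=\gamma+\alpha''[m]$. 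Thus the problem reduces to showing $\alpha''[m]\le\omega^{t-1}\cdot m$.

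This last inequality is the only step where the hypothesis $\MC(\alpha)<m$ enters, via $\MC(\alpha'')\le\MC(\alpha)<m$. If the leading exponent of $\alpha''$ is at most $t-2$, then $\alpha''[m]\le\alpha''<\omega^{t-1}\le\omega^{t-1}\cdot m$ and we are done. Otherwise I will write $\alpha''=\omega^{t-1}\cdot c+\alpha'''$ with $1\le c<m$ and $\alpha'''<\omega^{t-1}$, and the remaining subcases (according to whether $\alpha'''$ is zero and whether $t=1$ or $t\ge 2$) each reduce to a short ordinal arithmetic check using $c+1\le m$ and $\omega^{t-2}\cdot m<\omega^{t-1}$.

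The main obstacle I expect is purely the bookkeeping in that last subcase: applying $[m]$ to a trailing $\omega^{t-1}\cdot c$ introduces a fresh coefficient $m$ at exponent $t-2$, and one has to verify that this does not push us past $\omega^{t-1}\cdot m$. The role of the hypothesis $\MC(\alpha)<m$ is precisely to guarantee $c<m$, which leaves just enough headroom at exponent $t-1$ to absorb the new $\omega^{t-2}\cdot m$ term below $\omega^{t-1}$.
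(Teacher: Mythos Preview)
Your proof is correct and follows essentially the same case decomposition as the paper. The one place you do more work than necessary is the final subcase: rather than computing $\alpha''[m]$ explicitly and showing $\alpha''[m]\le\omega^{t-1}\cdot m$ through further subcases on $\alpha'''$ and $t$, the paper simply observes that $\MC(\alpha'')<m$ together with $\alpha''<\omega^{t}$ already forces $\alpha''<\omega^{t-1}\cdot m$ (the coefficient at exponent $t-1$ is $<m$), whence $\alpha[m]\le\alpha=\gamma+\alpha''<\gamma+\omega^{t-1}\cdot m=\beta[m]$ in one line, bypassing the bookkeeping you anticipated.
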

\begin{proof}
The case $\alpha=\beta$ is trivial, so we assume $\alpha<\beta$.
Write $\beta=\beta'+\ome^{n}$.
If $\alpha\le \beta'$, then $\alpha[m]\le\beta'\le \beta[m]$.
Otherwise, $n\ge 1$ and there exists $\gamma\unlhd\beta'$ such that $\alpha=\beta'+\gamma$ and $\gamma<\ome^{n}$.
Since $\MC(\alpha)< m$, we also have $\MC(\gamma)<m$,  thus $\gamma<\omega^{n-1}\cdot m = \ome^{n}[m]$.
Therefore, we obtain $\alpha[m]\le \alpha < \beta'+\ome^{n}[m] = \beta[m]$.
\end{proof}

\begin{lem}\label{lem:alpha-large-subset}
Let $\alpha<\ome^{\ome}$ and $X,Y\subseteq_{\fin}\N$ where $X = \{ x_0 < \dots < x_{\ell-1} \}$, $Y = \{ y_0 < \dots < y_{\ell'-1} \}$
for $\ell\le \ell'$.
Assume that $y_{i}\le x_{i}$ for each $i<\ell$ and that $X$ is $\alpha$-large. Then $Y$ is $\alpha$-large.

In particular, if $X$ is $\alpha$-large and $X\subseteq Y$, then $Y$ is $\alpha$-large.
\end{lem}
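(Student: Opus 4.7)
My plan is induction on $\alpha$. Before diving in, observe that the ``In particular'' clause is an immediate special case: if $X\subseteq Y$, then the $i$-th element of $X$ must occur at position $\ge i$ in $Y$, so $y_i\le x_i$. The base case $\alpha=0$ is trivial, and for $\alpha=\beta+1$ I would use that $X\setminus\{x_0\}$ is $\beta$-large and apply the inductive hypothesis to $X\setminus\{x_0\}$ and $Y\setminus\{y_0\}$ (which satisfy $|Y\setminus\{y_0\}|\ge|X\setminus\{x_0\}|$ and $y_{i+1}\le x_{i+1}$) to conclude that $Y\setminus\{y_0\}$ is $\beta$-large, hence $Y$ is $\alpha$-large.

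The heart of the proof is the case $\alpha=\beta+\omega^n$ with $n\ge 1$. Here $X\setminus\{x_0\}$ is $(\beta+\omega^{n-1}x_0)$-large, whereas what I want is that $Y\setminus\{y_0\}$ is $(\beta+\omega^{n-1}y_0)$-large, which is an instance of the same statement at a strictly smaller ordinal. The plan is to invoke the following splitting property of $\alpha$-largeness: since $\beta+\omega^{n-1}x_0=(\beta+\omega^{n-1}y_0)+\omega^{n-1}(x_0-y_0)$ is a valid Cantor normal form sum, one obtains a decomposition $X\setminus\{x_0\}=X_a\cup X_b$ with $X_a<X_b$, where $X_a$ is $\omega^{n-1}(x_0-y_0)$-large and $X_b$ is $(\beta+\omega^{n-1}y_0)$-large. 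Applying the inductive hypothesis at $\beta+\omega^{n-1}y_0<\alpha$ with $X_b$ in the role of $X$ and $Y\setminus\{y_0\}$ in the role of $Y$ then finishes the job: the size inequality $|Y\setminus\{y_0\}|\ge|X_b|$ is clear, and the required domination $y_{1+i}\le(X_b)_i=x_{1+|X_a|+i}$ follows from $y_{1+i}\le x_{1+i}\le x_{1+|X_a|+i}$, using the hypothesis together with the monotonicity of $X$'s enumeration.

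The main obstacle is the splitting property itself, which is not given in the excerpt: for $\alpha_1\unrhd\alpha_2$, the set $X$ is $(\alpha_1+\alpha_2)$-large iff $X$ splits as $X_1<X_2$ with $X_1$ being $\alpha_2$-large and $X_2$ being $\alpha_1$-large. I would first prove this as a preliminary lemma by induction on $\alpha_2$, using the identity $(\alpha_1+\alpha_2)[m]=\alpha_1+\alpha_2[m]$ valid when $\alpha_2>0$: the reduction process for $X$ first drives $\alpha_2$ down to $0$ on some initial segment, then continues with $\alpha_1$ on the remaining suffix, and this natural cut point gives the required split.
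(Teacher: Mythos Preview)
Your argument is correct, but it takes a genuinely different route from the paper. The paper does \emph{not} proceed by induction on $\alpha$; instead, it fixes $\alpha$, $X$, $Y$ and shows by finite induction on $i<\ell$ that there is some $j_i\ge i$ with $\alpha[y_0]\dots[y_i]=\alpha[x_0]\dots[x_{j_i}]$. The case analysis is on whether $\beta:=\alpha[y_0]\dots[y_i]$ is $0$, a successor, or a limit, and in the limit case one locates $j_{i+1}$ as the first index where the ``overshoot'' $\omega^{n-1}(x_{j_i+1}-y_{i+1})$ has been reduced to $0$. Taking $i=\ell-1$ forces $j_{\ell-1}=\ell-1$ and gives $\alpha[y_0]\dots[y_{\ell-1}]=0$.

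Two remarks on the comparison. First, your preliminary splitting property is exactly (one direction of) the paper's Lemma~1.3, which is stated and proved \emph{after} the present lemma and in fact appeals to it; so in the paper's ordering your argument would be circular. You avoid this by noting that the forward direction you need can be proved directly from the identity $(\alpha_1+\alpha_2)[m]=\alpha_1+\alpha_2[m]$ without invoking the present lemma --- that is correct, but it is worth making the independence explicit. Second, your proof is a genuine transfinite induction on $\alpha<\omega^\omega$: in the limit step you pass from $\beta+\omega^n$ to $\beta+\omega^{n-1}y_0$, and there is no obvious single natural-number parameter that decreases. The paper's direct argument uses only ordinary induction on $i<\ell$, with no transfinite component. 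This is not merely aesthetic: in Section~3 the authors stress that their arguments avoid transfinite induction up to $\omega^\omega$ so as to formalize in $\EFA$, and the present lemma is one of the places where that matters. Your approach is perfectly valid mathematics and arguably more structural, but it would need reworking to meet that formalization goal.
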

\begin{proof}
We will show the following by induction on $i$:
\begin{itemize}
 \item[] for any $i<\ell$, there exists $j_{i}<\ell$ such that $j_{i}\ge i$ and $\alpha[y_{0}]\dots[y_{i}]=\alpha[x_{0}]\dots[x_{j_{i}}]$.
\end{itemize}
The base case, which corresponds to $i = -1$, is the trivial statement $\alpha=\alpha$.

Assume $\beta:=\alpha[y_{0}]\dots[y_{i}]=\alpha[x_{0}]\dots[x_{j_{i}}]$ and $i+1<\ell$.
If $\beta=0$, put $j_{i+1}=\max\{ j_{i},i+1\}$.
If $\beta=\beta'+1$, then $\beta[y_{i+1}]=\beta[x_{j_{i}+1}]$, so put $j_{i+1}=j_{i}+1$. Note that $x_{j_{i}+1}$ must exist,
because $\alpha[x_{0}]\dots[x_{j_{i}}] = \beta \neq 0 =  \alpha[x_{0}]\dots[x_{\ell-1}]$.

If $\beta=\beta'+\ome^{n}$ for some $n\ge 1$, then \[\beta[x_{j_{i}+1}]=\beta'+\ome^{n-1}\cdot (x_{j_{i}+1})=\beta[y_{i+1}]+\ome^{n-1}\cdot (x_{j_{i}+1}-y_{i+1}).\]
Since $\beta[x_{j_{i}+1}]\dots[x_{\ell-1}]=0$, we have $\ome^{n-1}\cdot (x_{j_{i}+1}-y_{i+1})[x_{j_{i}+2}]\dots[x_{\ell-1}]=0$.
(Otherwise, $\beta[x_{j_{i}+1}]\dots[x_{\ell-1}]=\beta[y_{i+1}]+\ome^{n-1}\cdot (x_{j_{i}+1}-y_{i+1})[x_{j_{i}+2}]\dots[x_{\ell-1}]>0$.)
Let $j_{i+1}$ be the smallest $j$ such that $\ome^{n-1}\cdot (x_{j_{i}+1}-y_{i+1})[x_{j_{i}+2}]\dots[x_{j}]=0$. We then have $\beta[x_{j_{i}+1}]\dots[x_{j_{i+1}}]=\beta[y_{i+1}]$.

Now, since $j_{\ell-1}$ must equal $\ell-1$, we have $\alpha[y_{0}]\dots[y_{\ell-1}]=\alpha[x_{0}]\dots[x_{\ell-1}]=0$.
\end{proof}

For a given $\alpha$-large set $X = \{ x_0 < \dots < x_{\ell-1} \}\subseteq_{\fin} \N$, take the minimum $i<\ell$ such that $\alpha[x_{0}]\cdots[x_{i}]=0$ and define $X\rest \alpha$ to be the set $\{x_{0},\dots,x_{i}\}$. (Thus, $X\rest \alpha$ is the smallest $\alpha$-large initial segment of $X$.)
\begin{lem}
\label{lem:alpha-large-partition}
Let $\alpha=\alpha_{k-1}+\dots+\alpha_{0}<\ome^{\ome}$ where $\alpha_{k-1}\unrhd\dots\unrhd\alpha_{0}$.
Then, a set $X\subseteq_{\fin}\N$ is $\alpha$-large if and only if there is a partition $X=X_{0}\sqcup\dots\sqcup X_{k-1}$ such that $\max X_{i}<\min X_{i+1}$ and $X_{i}$ is $\alpha_{i}$-large.
\end{lem}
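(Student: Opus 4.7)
The plan is to induct on $k$. The base $k = 1$ is trivial, and for the induction step it suffices to prove the two-summand case: if $\beta \unrhd \gamma$, then a set $X$ is $(\beta + \gamma)$-large iff it splits as $X = Y \sqcup Z$ with $\max Y < \min Z$, $Y$ being $\gamma$-large, and $Z$ being $\beta$-large. Granting this, one applies it with $\gamma = \alpha_0$ and $\beta = \alpha_{k-1} + \dots + \alpha_1$ and then invokes the inductive hypothesis on $Z$.

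The key technical ingredient is the observation that whenever $\beta \unrhd \gamma$ and $\gamma > 0$, one has $(\beta + \gamma)[y] = \beta + \gamma[y]$, and moreover $\beta \unrhd \gamma[y]$, for every $y \in \N$. This is verified by a direct case split on the Cantor normal form of $\gamma$: if $\gamma = \gamma' + 1$ then $\gamma[y] = \gamma'$, while if $\gamma = \gamma' + \omega^n$ with $n \geq 1$ then $\gamma[y] = \gamma' + \omega^{n-1} \cdot y$. In both cases the top exponent of $\gamma[y]$ is no larger than that of $\gamma$, so the condition $\beta \unrhd \gamma[y]$ is automatic, and the identity follows directly from the definition of the sum operation (the $[y]$ operation acts on the last summand of $\beta + \gamma$, which lies entirely in $\gamma$).

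For the $(\Rightarrow)$ direction of the two-summand case, iterate the identity along $X = \{x_0 < \dots < x_{\ell-1}\}$: as long as $\gamma[x_0]\dots[x_{i-1}] > 0$, we have $(\beta + \gamma)[x_0]\dots[x_i] = \beta + \gamma[x_0]\dots[x_i]$. Letting $i^*$ be the least index at which $\gamma[x_0]\dots[x_{i^*-1}] = 0$, the initial segment $Y := \{x_0, \dots, x_{i^*-1}\}$ is $\gamma$-large, and since processing $Y$ reduces $\beta + \gamma$ exactly to $\beta$ while processing all of $X$ reduces it to $0$, the tail $Z := X \setminus Y$ is $\beta$-large.

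For the $(\Leftarrow)$ direction, given a splitting $X = Y \sqcup Z$ as in the hypothesis, set $Y_0 := Y \rest \gamma$ (the shortest $\gamma$-large initial segment of $Y$) and $Y' := Y \setminus Y_0$. By the key identity, processing $Y_0$ against $\beta + \gamma$ leaves exactly $\beta$, because the $\gamma$-part stays positive at every intermediate step so no element of $Y_0$ touches $\beta$. Since $Z \subseteq Y' \cup Z$ and $Z$ is $\beta$-large, Lemma \ref{lem:alpha-large-subset} gives that $Y' \cup Z$ is also $\beta$-large, so processing the remainder of $X$ against $\beta$ drives the ordinal to $0$, and hence $X$ is $(\beta + \gamma)$-large. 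The main obstacle is the key identity itself, which requires care with the Cantor normal forms and with preservation of the $\unrhd$ relation; once that is in place, all remaining work is straightforward bookkeeping.
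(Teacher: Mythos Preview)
Your proof is correct and follows essentially the same approach as the paper's: both arguments hinge on the observation that $(\beta+\gamma)[y]=\beta+\gamma[y]$ whenever $\beta\unrhd\gamma>0$, both use the restriction $Y\rest\gamma$ to isolate the initial $\gamma$-large piece, and both invoke Lemma~\ref{lem:alpha-large-subset} for the converse. The only structural difference is that you reduce to the two-summand case by induction on $k$ and spell out the key identity explicitly, whereas the paper handles general $k$ in one go by defining $X_i=(X\rest(\alpha_i+\dots+\alpha_0))\setminus(X\rest(\alpha_{i-1}+\dots+\alpha_0))$ and leaving the identity implicit; your version is the more detailed of the two.
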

\begin{proof}
Let $X= \{ x_0 < \dots < x_{\ell-1} \}$ be $\alpha$-large. By Lemma
\ref{lem:alpha-large-subset}, 
we can assume without loss of generality that $X = X\rest\alpha$. For each $i < k$, let $X_i$ be $(X\rest(\alpha_{i} + \ldots + \alpha_0)) \setminus (X\rest(\alpha_{i-1} + \ldots + \alpha_0))$. One checks by induction on $i$ that $X_i$ equals $(X\setminus (X_{0}\cup\dots\cup X_{i-1}))\rest \alpha_{i}$. It follows that $\max X_{i}<\min X_{i+1}$ and $X_{i}$ is $\alpha_{i}$-large.

Conversely, if $X=X_{0}\sqcup\dots\sqcup X_{k-1}$ such that $\max X_{i}<\min X_{i+1}$ and $X_{i}$ is $\alpha_{i}$-large, put $Y_{i}=X_{i}\rest \alpha_{i}$.
Then, $Y=Y_{0}\sqcup\dots\sqcup Y_{k-1}$ is $\alpha$-large by the definition, and thus $X$ is $\alpha$-large by Lemma~\ref{lem:alpha-large-subset}.
\end{proof}


In \cite{KS81}, Ketonen and Solovay use $\alpha$-largeness to analyze the Ramsey-theoretic statement known as the Paris-Harrington principle
and to clarify the relationship between the principle and hierarchies of fast growing functions.
In the process, they prove the following result 
concerning $\RT^{2}_{k}$-$\omega$-largeness.
\begin{thm}[Ketonen-Solovay \cite{KS81}, Lemma 6.4]\label{KS-theorem}
Let $n\ge 2$.
If $X\subseteq_{\fin}\N$ is $\ome^{n+4}$-large and $\min X \ge 3$, then it is $\RT^{2}_{n}$-$\omega$-large.
\end{thm}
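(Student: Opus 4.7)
My approach would be by induction on the number of colors $n$, with the base case $n=2$ handled by an Erd\H{o}s--Rado-style pre-homogeneous sequence construction, and the inductive step handled by merging colors.

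For the base case $n=2$, I would construct, greedily and in the spirit of the standard Erd\H{o}s--Rado tree, a pre-homogeneous sequence $x_0 < x_1 < \ldots < x_L$ along with sets $X \supseteq X_1 \supseteq X_2 \supseteq \ldots$ and colors $c_0, c_1, \ldots$ such that $x_i = \min X_i$ and $P(x_i, y) = c_i$ for every $y \in X_{i+1}$. At each stage we look at the $2$-coloring $P(x_i, \cdot)$ on $X_i \setminus \{x_i\}$ and restrict to one of the two colour classes. The quantitative heart of the argument is a ``pigeonhole for ordinal largeness'' lemma: if $Y$ is $\omega^{k}$-large and is split into two subsets, one of them contains a subset that is $\omega^{k-1}$-large (with a small, controlled further loss depending on $\min Y$). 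Using Lemma~\ref{lem:alpha-large-partition}, I would unfold the $\omega^{k}$-largeness of $Y$ into consecutive $\omega^{k-1}$-large blocks and take majority colour within carefully chosen pairs of blocks. Iterating this through the $\omega^{6}$ budget produces a pre-homogeneous sequence that is itself $\omega^{2}$-large (roughly), so that a final pigeonhole on $c_0, c_1, \ldots$ (a $2$-colouring of an $\omega^{2}$-large set) yields an $\omega$-large monochromatic subsequence, which is $P$-homogeneous as required.

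For the inductive step $n \to n+1$, given $P : [X]^{2} \to n+1$ on an $\omega^{(n+1)+4}$-large $X$, I would define the collapsed $2$-colouring $P_{2}(a,b) = 0$ if $P(a,b) < n$ and $P_{2}(a,b) = 1$ if $P(a,b) = n$. A strengthened form of the base-case argument (with the same tree construction but carried out for more levels, giving $\omega^{\alpha+1}$-large $\to$ $\omega^{\alpha}$-large $P_{2}$-homogeneous) then produces a $P_{2}$-homogeneous $Y \subseteq X$ that is $\omega^{n+4}$-large. If $P_2$ is constant with value $1$ on $Y$, then $Y$ is $P$-homogeneous in colour $n$ and is already much more than $\omega$-large; otherwise $P \restriction [Y]^{2}$ uses only $n$ colours and the inductive hypothesis gives the desired $\omega$-large $P$-homogeneous subset.

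The main obstacle I expect is the tight quantitative tracking needed to land on the constant $+4$ in the exponent rather than something larger. A naive Erd\H{o}s--Rado tree loses more than one level of exponent per branching step, so the argument needs to use Lemma~\ref{lem:alpha-large-partition} in a tighter way --- essentially processing blocks of $\omega^{k-1}$-large pieces at once and letting the majority colour inherit the block structure, so that the $\omega$-largeness of the initial segment $X$ is converted to homogeneous $\omega$-largeness with only an additive constant overhead. Getting this bookkeeping right, in particular ensuring that the minimum of the final homogeneous subsequence stays bounded in a way that justifies calling it $\omega$-large, is where I would expect to spend most of the effort.
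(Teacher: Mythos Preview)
Your inductive step contains a genuine gap. To pass from an $\ome^{(n+1)+4}$-large set to an $\ome^{n+4}$-large $P_{2}$-homogeneous subset you are invoking the statement ``$\ome^{m+1}$-large implies $\RT^{2}_{2}$-$\ome^{m}$-large'' (with $m = n+4$), and this is simply false: by the lower bound of Kotlarski, Piekart and Weiermann \cite{KPW}, any $\RT^{2}_{2}$-$\ome^{m}$-large set must already be $\ome^{2m}$-large, so a loss of only one in the exponent is impossible for $m \ge 2$. More broadly, any induction on the number of colours that collapses to two colours at each step compounds the largeness loss multiplicatively (at best $\ome^{Cm}$-large $\to$ $\ome^{m}$-large homogeneous per step, with $C \ge 2$), producing a final exponent of order $C^{\,n}$ rather than $n + O(1)$. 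The colour-merge strategy therefore cannot reach the stated bound, no matter how the base case is arranged.

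The paper's proof avoids this by \emph{not} inducting on the number of colours. It treats all $n$ colours simultaneously: for each $x \in X$ one builds the hereditarily-minimal prehomogeneous sequence $\sigma_{x}$ (the path to $x$ in the canonical $n$-branching Erd\H{o}s--Rado tree), and then assigns to each index $i$ a single ordinal $\gamma_{i} = \gamma_{i}^{1} + \dots + \gamma_{i}^{n} < \ome^{n}$ that records, roughly, how much room remains at each ``colour depth'' $d \le n$ before a new $d$-critical branching is forced. Assuming no $\ome$-large homogeneous set exists, one shows $\gamma_{i} > 0$ and $\gamma_{i+1} \le \gamma_{i}[x_{i+1}]$; this contradicts the $(\ome^{n}+1)$-largeness of a suitably preprocessed subset of $X$. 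The additive $+4$ in the exponent comes only from that preprocessing (passing to an $\ome^{3}$-sparse subset with minimum exceeding $n$), not from any per-colour loss --- which is why the bound is linear in $n$ rather than exponential.
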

We will give a new proof of this theorem in Subsection~\ref{subsec:ket-sol}.

Theorem \ref{KS-theorem} and its generalization to $\RT^{m}_{k}$ proved in \cite{KS81} only deal 
with the question how much $\alpha$-largeness is guaranteed to imply $\RT^{m}_{k}$-$\omega$-largeness, 
that is, the existence of an $\omega$-large homogeneous set for any given colouring. 
Our target is a generalization of the case $m=k=2$ to bounds implying $\RT^{2}_{2}$-$\omega^n$-largeness for larger $n \in \N$.
As already mentioned, even though this sort of work is purely combinatorial,
much of the motivation comes from the study of the proof-theoretic strength of infinite Ramsey's theorem for pairs. 
We discuss this in more detail in Section~\ref{sec:formalization}.

Our main result is as follows.
 
\begin{thm}\label{thm:RT22-largeness-main}
If $X\subseteq_{\fin}\N$ is $\ome^{300n}$-large and $\min X\ge 3$, then $X$ is $\RT^{2}_{2}$-$\omega^{n}$-large.
\end{thm}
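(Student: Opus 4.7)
The plan is to prove Theorem~\ref{thm:RT22-largeness-main} by induction on $n$, using the Ketonen--Solovay bound (Theorem~\ref{KS-theorem}) both as the base case and as an internal subroutine. For $n=0$ and $n=1$ the statement reduces to showing that every $\omega^{300}$-large set is $\RT^{2}_{2}$-$\omega$-large, which is immediate from Theorem~\ref{KS-theorem} since $\omega^{300}$-largeness implies $\omega^{6}$-largeness via Lemma~\ref{lem:alpha-large-subset}.

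For the inductive step, given an $\omega^{300n}$-large set $X$ with $\min X \geq 3$ and a colouring $P\colon[X]^{2}\to 2$, I would exploit the observation that by definition $X\setminus\{\min X\}$ is $\omega^{300n-1}\cdot\min X$-large, and hence by Lemma~\ref{lem:alpha-large-partition} partitions into $\min X$ many $\omega^{300n-1}$-large blocks, each of which is in particular $\omega^{300(n-1)}$-large. Applying the induction hypothesis within each block produces an $\omega^{n-1}$-large $P$-homogeneous subset. The key task is then to \emph{combine} enough of these per-block homogeneous subsets into a single $\omega^{n}$-large $P$-homogeneous set, using a pigeonhole argument on their common colour together with a secondary colouring argument (colouring pairs of representatives from distinct blocks by the restriction of $P$) to ensure cross-block compatibility.

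The main obstacle is keeping the exponent loss in the inductive step down to the constant $300$ rather than a factor that grows with $n$. A naive approach that first extracts block-internal homogeneous sets and then solves a cross-block Ramsey problem on representatives would blow up the exponent too severely, essentially recovering the $\omega^{\omega^{n}\cdot 2}$ bound of (\ref{eqn:bigorajska-kotlarski}). To achieve linear growth, I would interleave the two stages: first thin the collection of blocks so that representatives already form a \emph{pre-homogeneous} configuration for $P$ (using that $\omega^{300}$-largeness leaves abundant room for the filtering), and only then apply the induction hypothesis inside the selected blocks to assemble the final homogeneous set. Throughout, Lemmas~\ref{lem:alpha-decreasing}, \ref{lem:alpha-large-subset} and \ref{lem:alpha-large-partition} serve as the bookkeeping tools that track ordinal largeness through each filtering or partition step; the delicate combinatorial work is to ensure that the total cost of these operations is uniformly bounded by $\omega^{300}$ at each inductive level, which will likely require several distinct subroutines (including an iterated application of Theorem~\ref{KS-theorem}) each of which consumes only a constant number of $\omega$-factors.
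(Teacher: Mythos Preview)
Your proposal correctly identifies the central difficulty---keeping the exponent loss per inductive level bounded by a constant---but the proposed fix (``first thin the collection of blocks so that representatives already form a pre-homogeneous configuration for $P$'') does not close the gap. Pre-homogeneity on a set of \emph{representatives} tells you nothing about the colour $P(x,y)$ when $x,y$ are arbitrary elements of two distinct blocks; so once you apply the induction hypothesis inside each surviving block, the resulting $\omega^{n-1}$-large homogeneous sets need not be mutually compatible at all. To merge them into a single $\omega^{n}$-large homogeneous set you would need the cross-block colour to depend only on the pair of blocks, not on the specific points chosen, and you have given no mechanism to achieve that. This is exactly the problem that forces the blow-up to $\omega^{\omega^{n}}$ in the naive argument, and your ``interleaving'' suggestion does not explain how it is avoided.

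The paper's proof proceeds along a structurally different route. It does not induct directly on $n$ in the main statement; instead it uses the decomposition $\RT^{2}_{2}=\EM+\ADS$. The $\EM$ step (finding a large subset on which $P$ is transitive) is where the real work lies, and the key tool is the \emph{grouping principle} (Theorem~\ref{thm:grouping-main}): from an $\omega^{n+6k}$-large exp-sparse set one can extract an $(\omega^{n},\omega^{k})$-grouping, a sequence of $\omega^{n}$-large groups such that the colour between any two groups is constant. This is precisely the cross-block stabilization that your sketch is missing, and it is obtained by a careful double application of Lemma~\ref{lem1:stabilize-colouring} (shrinking each block from below and then from above) rather than by any pre-homogeneity argument on single representatives. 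Once one has groupings, the induction for $\EM$-largeness (Theorem~\ref{thm:EM-largeness}) goes through with a linear exponent, and the $\ADS$ step (Theorem~\ref{thm:ADS-largeness}) is handled separately by a short argument using Theorem~\ref{KS-theorem}. The final bound $300n$ comes from composing the constants in Lemma~\ref{lem:alpha-sparse}, Theorem~\ref{thm:EM-largeness}, and Theorem~\ref{thm:ADS-largeness}.
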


\section{Calculation}\label{sec:calculation}
In this section, we prove Theorem~\ref{thm:RT22-largeness-main}.

To simplify our calculations, we only consider ``sparse enough'' finite sets.
A set $X$ with $\min X\ge 3$ is said to be \emph{exp-sparse} if for any $x,y\in X$, $x<y$ implies $4^{x}<y$.
More generally, $X$ is said to be \emph{$\alpha$-sparse} if for any $x,y\in X$, $x<y$ implies that the interval $(x,y]$ is $\alpha$-large.
Trivially, any subset of an $\alpha$-sparse set is $\alpha$-sparse.
By an easy calculation, one checks that any $\ome^{3}$-sparse set is exp-sparse:
$y>2x$ whenever $(x,y]$ is $\ome$-large, $y>x2^{x}$ whenever $(x,y]$ is $\ome^{2}$-large,
and $y>2^{2^{\dots^{x}}}$ (where there are $x$ applications of the exponential function) whenever $(x,y]$ is $\ome^{3}$-large.
\begin{lem}
\label{lem:alpha-sparse}
Let $n,m\in \N$.
If $X\subseteq_{\fin}\N$ is $(\ome^{n+m}+1)$-large and $\min X\ge 3$, then there exists $Y\subseteq X$ such that $Y$ is $\ome^{n}$-large and $\ome^{m}$-sparse.
In particular, if $X\subseteq_{\fin}\N$ is $(\ome^{n+3}+1)$-large and $\min X\ge 3$, then there exists $Y\subseteq X$ such that $Y$ is $\ome^{n}$-large and $\exp$-sparse.
\end{lem}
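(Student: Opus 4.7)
The plan is to prove the main statement by induction on $n$; the ``in particular'' clause then follows immediately by specialising to $m=3$ and invoking the paper's earlier observation that any $\omega^{3}$-sparse set is exp-sparse.

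The base case $n=0$ is easy: I would take $Y=\{\min X\}$, which is $\omega^{0}$-large (nonempty) and vacuously $\omega^{m}$-sparse.

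For the inductive step, given $X$ which is $(\omega^{n+1+m}+1)$-large with $\min X = x_{0}\ge 3$, I would set $x_{1} = \min(X\setminus\{x_{0}\})$ and use the definition of largeness to conclude that $X\setminus\{x_{0},x_{1}\}$ is $\omega^{n+m}\cdot x_{1}$-large. By Lemma~\ref{lem:alpha-large-partition} this gives a partition $X\setminus\{x_{0},x_{1}\} = D_{1}\sqcup\dots\sqcup D_{x_{1}}$ into $x_{1}$ consecutive $\omega^{n+m}$-large blocks. For each $i$ I would form $E_{i} := \{p_{i}\}\cup D_{i}$ by prepending a sacrificial element (taking $p_{1} = x_{1}$ and $p_{i} = \max D_{i-1}$ for $i\ge 2$); each $E_{i}$ is $(\omega^{n+m}+1)$-large with $\min E_{i}\ge 3$, so the induction hypothesis supplies an $\omega^{n}$-large, $\omega^{m}$-sparse subset $W_{i}\subseteq E_{i}$. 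After trimming $p_{i}$ if present, I obtain $\tilde W_{i}\subseteq D_{i}$, and piece everything together as $Y := \{x_{1}\}\cup \tilde W_{1}\cup \dots\cup \tilde W_{x_{1}}$. Applying Lemma~\ref{lem:alpha-large-partition} to $Y\setminus\{x_{1}\} = \tilde W_{1}\sqcup\dots\sqcup \tilde W_{x_{1}}$ then shows this union is $\omega^{n}\cdot x_{1}$-large, so $Y$ itself is $\omega^{n+1}$-large.

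The hard part will be the $\omega^{m}$-sparsity of $Y$: the internal sparsity of each $\tilde W_{i}$ is inherited from the induction hypothesis, but the boundary gaps $(x_{1},\min\tilde W_{1}]$ and $(\max\tilde W_{i},\min\tilde W_{i+1}]$ must also be $\omega^{m}$-large, and moreover removing $p_{i}$ from $W_{i}$ can destroy $\omega^{n}$-largeness when $p_{i}$ happens to be $\min W_{i}$. Both issues point to the need for a simultaneous strengthening of the inductive hypothesis, most naturally one producing a subset that is $(\omega^{n}+1)$-large (so trimming preserves $\omega^{n}$-largeness) and whose minimum sits past an $\omega^{m}$-large initial portion of the ambient set (so the boundary gaps automatically contain $\omega^{m}$-large subsets, via Lemma~\ref{lem:alpha-large-subset}). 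Formulating this tandem strengthening and checking its base case is where the bulk of the care lies; once it is in place, the inductive step proceeds essentially as sketched above.
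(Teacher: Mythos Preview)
Your overall inductive scheme --- partition into consecutive blocks, prepend a sacrificial point to each, apply the hypothesis, reassemble --- is exactly what the paper does, and you correctly diagnose that the induction hypothesis must be strengthened to control the boundary gaps. However, the specific strengthening you propose cannot be made to work. Requiring $Y$ to be $(\omega^{n}+1)$-large \emph{and} to have its minimum past an $\omega^{m}$-large initial portion of $X$ already fails at $n=0$: an $(\omega^{m}+1)$-large $X$ gives you exactly one $\omega^{m}$-large chunk (namely $X\setminus\{\min X\}$), not the two disjoint ones your conditions would demand. More seriously, any ``from below'' condition of this kind fights against the inductive step: once $\min Y$ is pushed above an $\omega^{m}$-large initial segment, the $x_{1}$ many $\omega^{n}$-large blocks you have assembled are no longer enough to make $Y$ itself $\omega^{n+1}$-large, since that now requires $\min Y$ many such blocks and $\min Y > x_{1}$.

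The paper resolves both issues in one stroke by strengthening \emph{from above}: one shows that there exists $Y\subseteq X\setminus\{\max X\}$ which is $\omega^{n}$-large and such that $Y\cup\{\max X\}$ is $\omega^{m}$-sparse. The base case is simply $Y=\{\min X\}$. In the inductive step, writing $x_i:=\max X_i$, one applies the hypothesis to $\{x_i\}\cup X_{i+1}$ and obtains $Y_i\subseteq(\{x_i\}\cup X_{i+1})\setminus\{x_{i+1}\}$ with $Y_i\cup\{x_{i+1}\}$ sparse. This kills the trimming problem outright (the $Y_i$ are automatically pairwise disjoint), and since $(\max Y_i,x_{i+1}]$ is $\omega^{m}$-large while $\min Y_{i+1}\ge x_{i+1}$, the boundary gaps are handled for free. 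Taking $Y=\{\min X\}\cup Y_0\cup\dots\cup Y_{k-2}$ keeps $\min Y=\min X$ small, so the $\omega^{n}$-largeness count goes through without tension.
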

\begin{proof}
 We will show the following slightly stronger condition by induction on $n$:
\begin{itemize}
 \item[] if $X\subseteq_{\fin}\N$ is $(\ome^{n+m}+1)$-large and $\min X\ge 3$, then there exists $Y\subseteq X\setminus \{\max X\}$ such that $Y$ is $\ome^{n}$-large and $Y\cup\{\max X\}$ is $\ome^{m}$-sparse.
\end{itemize}
For the case $n=0$, let $X$ be $(\ome^{m}+1)$-large and take $Y = \{\min X\}$. Then $Y$ is $\omega^0$-large, i.e.~$1$-large, and it follows
from Lemma \ref{lem:alpha-large-subset} and the $(\ome^{m}+1)$-largeness of $X$ that $\{\min X, \max X\}$ is $\ome^m$-sparse. 

We turn to the case $n\ge 1$. If $X$ is $(\ome^{n+m}+1)$-large, then $X\setminus\{\min X\}$ is $\ome^{n+m}$-large, thus there exist $X_{0},\dots,X_{k-1}$ such that 
\begin{itemize}
\item $X = \{\min X,\min(X\!\setminus\! \{\min X\})\}\sqcup X_{0}\sqcup\dots\sqcup X_{k-1}$, 
\item $k = \min(X\!\setminus\! \{\min X\})\ge  1+\min X$,
\item $\max X_{i}<\min X_{i+1}$,
\item each $X_{i}$ is $\ome^{n+m-1}$-large.
\end{itemize}
Put $x_{i}=\max X_{i}$. By the induction hypothesis applied to $\{x_{i}\}\cup X_{i+1}$ for $0\le i\le k-2$, 
there exist $Y_{0},\dots,Y_{k-2}$ such that $Y_{i}\subseteq \{x_{i}\}\cup X_{i+1}\!\setminus\!\{x_{i+1}\}$, $Y_{i}$ is $\ome^{n-1}$-large and $Y_{i}\cup\{x_{i+1}\}$ is $\ome^{m}$-sparse.
Now we can check that $Y=\{\min X\}\cup Y_{0}\cup\dots\cup Y_{k-2}$ is $\ome^{n}$-large and $Y\cup\{\max X\}$ is $\ome^{m}$-sparse.
\end{proof}

The following lemma means that if a large set $X$ is 2-coloured, 
we can always choose a ``majority'' colour without losing too much of its largeness.
This fact underlies most of the constructions in the core part of our proof,
as presented in Subsection \ref{subsec:grouping}. The lemma follows from the more general \cite[Theorem 1]{BK99}, 
but our proof is very simple and---crucially for our purposes---involves no use of transfinite induction. 

\begin{lem}
\label{lem:partition-large}
For each $n\in\N$, the following holds.
\begin{enumerate}
 \item If $X=Y_{0}\cup Y_{1}\subseteq_{\fin}\N$ is $\ome^{n}\cdot 2$-large and $\exp$-sparse, then $Y_{0}$ is $\ome^{n}$-large or $Y_{1}$ is $\ome^{n}$-large.
 \item If $X=Y_{0}\cup Y_{1}\subseteq_{\fin}\N$ is $\ome^{n}\cdot (4k)$-large and $\exp$-sparse, then $Y_{0}$ is $\ome^{n}\cdot k$-large or $Y_{1}$ is $\ome^{n}\cdot k$-large.
\end{enumerate}
\end{lem}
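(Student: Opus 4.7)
The plan is to prove part (1) by induction on $n$ and then derive part (2) as a straightforward pigeonhole consequence. The base case $n=0$ is immediate: a $2$-large set $X = Y_0 \cup Y_1$ has at least two elements, so at least one $Y_i$ is non-empty and hence $\omega^0$-large.

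For the inductive step, suppose $X$ is $\omega^{n+1} \cdot 2$-large and exp-sparse. First, I would apply Lemma~\ref{lem:alpha-large-partition} with the decomposition $\omega^{n+1} \cdot 2 = \omega^{n+1} + \omega^{n+1}$ to split $X = X_L \sqcup X_R$ with $X_L < X_R$ and each piece $\omega^{n+1}$-large. If one of $Y_0, Y_1$ is disjoint from $X_L$, the other contains all of $X_L$ and is $\omega^{n+1}$-large by Lemma~\ref{lem:alpha-large-subset}; so assume both colours meet $X_L$, which forces $\min Y_0, \min Y_1 \le \max X_L$. Setting $m_R = \min X_R$ and using exp-sparseness for the consecutive pair $\max X_L < m_R$, we obtain $m_R > 4^{\max X_L}$. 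Next, decompose $X_R \setminus \{m_R\}$ (which is $\omega^n \cdot m_R$-large) into consecutive $\omega^n$-large pieces $R_0 < \dots < R_{m_R - 1}$ via Lemma~\ref{lem:alpha-large-partition}, and pair them as $P_j = R_{2j} \cup R_{2j+1}$; each $P_j$ is $\omega^n \cdot 2$-large and exp-sparse, so the inductive hypothesis provides a dominating colour $c_j \in \{0,1\}$ with $Y_{c_j} \cap P_j$ being $\omega^n$-large. A pigeonhole on the $\lfloor m_R / 2 \rfloor$ dominating colours picks out a fixed colour $c$ winning $K \ge \lfloor m_R / 4 \rfloor$ pairs, and Lemma~\ref{lem:alpha-large-partition} assembles the corresponding $Y_c$-parts into an $\omega^n \cdot K$-large set $S \subseteq Y_c \cap X_R$.

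The main obstacle is upgrading the $\omega^n \cdot K$-largeness of $Y_c$ to $\omega^{n+1}$-largeness, and this is where exp-sparseness is decisive. From $m_R > 4^{\max X_L} \ge 4 \cdot \max X_L$ we obtain $K \ge \lfloor m_R / 4 \rfloor \ge \max X_L \ge \min Y_c$, while simultaneously $\min Y_c \le \max X_L < m_R \le \min S$ guarantees $S \subseteq Y_c \setminus \{\min Y_c\}$. Hence $Y_c \setminus \{\min Y_c\}$ is $\omega^n \cdot K$-large and therefore also $\omega^n \cdot \min Y_c$-large, establishing that $Y_c$ is $\omega^{n+1}$-large. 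Finally, for part (2), I would write $\omega^n \cdot (4k)$ as $2k$ copies of $\omega^n \cdot 2$ and use Lemma~\ref{lem:alpha-large-partition} to partition $X$ into $2k$ consecutive $\omega^n \cdot 2$-large blocks; part (1) assigns each block a dominating colour, pigeonhole produces a colour $c$ dominating at least $k$ blocks, and Lemma~\ref{lem:alpha-large-partition} assembles the $k$ corresponding $\omega^n$-large parts of $Y_c$ into an $\omega^n \cdot k$-large subset.
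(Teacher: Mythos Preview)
Your proposal is correct and follows essentially the same strategy as the paper's proof. The only difference is organizational: the paper first proves the implication $1 \Rightarrow 2$ for each fixed $n$ and then, in the inductive step for part~1 at level $n$, invokes part~2 of the induction hypothesis at level $n-1$ (applied to $X_1\setminus\{\min X_1\}$, which is $\omega^{n-1}\cdot 4c$-large), whereas you inline that same pigeonhole argument by pairing the $\omega^n$-large pieces $R_{2j}\cup R_{2j+1}$ and applying part~1 of the induction hypothesis directly to each pair.
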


\begin{proof}
First, we show that 1.~implies 2.~for each $n\in\N$.
If $X$ is $\ome^{n}\cdot (4k)$-large, then there exists a partition $X=X_{0}\sqcup X_{1}\sqcup\dots\sqcup X_{2k-1}$ 
such that $\max X_{i}<\min X_{i+1}$ and $X_{i}$ is $\ome^{n}\cdot 2$-large.
Then, by 1., at least one of $Y_{0}\cap X_{i}$ and $Y_{1}\cap X_{i}$ is $\ome^{n}$-large for each $i<2k$.
Depending on which case happens for at least half the $i$'s, at least one of $Y_{0}\cap X$ and $Y_{1}\cap X$ 
must be $\ome^{n}\cdot k$-large.

We now show 1.,~and thus also 2.,~by induction on $n$. The case $n=0$ is trivial, so assume $n\ge 1$.
Let $X=Y_{0}\cup Y_{1}\subseteq_{\fin}\N$ be $\ome^{n}\cdot 2$-large and $\exp$-sparse.
Take a partition $X=X_{0}\sqcup X_{1}$ so that $\max X_{0}<\min X_{1}$ and $X_0,X_1$ are both $\ome^{n}$-large.
If $X_{0}\subseteq Y_{0}$ or $X_{0}\subseteq Y_{1}$, we are done.
Otherwise, there are $c_{0},c_{1}\in X_{0}$ such that $c_{0}\in Y_{0}$ and $c_{1}\in Y_{1}$.
Put $c=\max\{c_{0},c_{1}\}$.
Then, by $\exp$-sparseness, $4^{c}<\min X_{1}$, hence $X_{1}\setminus\{\min X_{1}\}$ is $\ome^{n-1}\cdot (4c)$-large.
By 2.~of the induction hypothesis, at least one of $Y_{0}\cap X_{1}$ and $Y_{1}\cap X_{1}$ is $\ome^{n-1}\cdot c$-large.
Thus, at least one of $\{c_{0}\}\cup (Y_{0}\cap X_{1})\subseteq Y_{0}$ and $\{c_{1}\}\cup (Y_{1}\cap X_{1})\subseteq Y_{1}$ is $\ome^{n}$-large.
\end{proof}

\subsection{The grouping principle}\label{subsec:grouping}
In this subsection, we consider the notion of grouping, introduced in \cite[Section 7]{PY}
as a useful tool in the analysis of Ramsey's theorem for pairs. We will obtain an upper bound 
on the largeness of a set needed to guarantee the existence of sufficiently large groupings.

\begin{definition}[grouping]
Let $\alpha, \beta<\ome^{\ome}$.
Let $X\subseteq\N$ and let $P:[X]^{2}\to 2$ be a colouring.
A finite family (sequence) of finite sets $\langle F_{i}\subseteq X: i<\ell \rangle$
is said to be an \emph{$(\alpha,\beta)$-grouping for $P$} if
\begin{enumerate}
 \item $\A i\!<\!j\!<\!\ell\, \max F_{i}<\min F_{j}$,
 \item for any $i<\ell$, $F_{i}$ is $\alpha$-large,
 \item $\{\max F_{i}: i<\ell\}$ is $\beta$-large, and,
 \item $\A i\!<\!j\!<\!\ell\, \A x,x'\!\in\! F_{i}\,\A y,y'\!\in\! F_{j}\, \left[P(x,y)=P(x',y')\right]$.
\end{enumerate}
Moreover, $\langle F_{i}\subseteq X: i<\ell \rangle$ is said to be a \emph{strong $(\alpha,\beta)$-grouping for $P$} if
the fourth condition is replaced with
\begin{enumerate}
 \item[4'.] $\E c\!<\!2\, \A i\!<\!j\!<\!\ell\, \A x\!\in\! F_{i}\,\A y\!\in\! F_{j}\, [P(x,y)=c]$.
\end{enumerate}

\end{definition}
The intuition is that each $F_i$ is a ``group'' and that the colour of a pair consisting of representatives
of two distinct groups depends only on the groups, not on the representatives.
We say that a set $X\subseteq \N$ \emph{admits an $(\alpha,\beta)$-grouping} if for any colouring $P:[X]^{2}\to 2$, 
there exists an $(\alpha,\beta)$-grouping for $P$.
Our target theorem in this subsection is the following.

\begin{thm}
\label{thm:grouping-main}
Let $n,k\in\N$.
If $X\subseteq_{\fin}\N$ is $\ome^{n+6k}$-large and $\exp$-sparse, then $X$ admits an $(\omega^{n},\omega^{k})$-grouping.
\end{thm}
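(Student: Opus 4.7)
The plan is to prove Theorem~\ref{thm:grouping-main} by induction on $k$, using an outer--inner decomposition for the inductive step and treating the cases $k = 0$ and $k = 1$ as separate base cases. For $k = 0$, the single group $F_0 := X$ gives an $(\omega^n, 1)$-grouping trivially.

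For the inductive step with $k + 1 \ge 2$, given $X$ that is $\omega^{n+6(k+1)}$-large and exp-sparse, I would first apply the inductive hypothesis with parameters $(n + 6k,\, 1)$---valid because $X$ is $\omega^{(n+6k) + 6}$-large---to obtain an \emph{outer} $(\omega^{n+6k}, \omega)$-grouping $\langle G_0, \dots, G_{\ell - 1}\rangle$. Each $G_i$ is $\omega^{n+6k}$-large and exp-sparse; applying the inductive hypothesis with $(n, k)$ to each $G_i$ would then give an \emph{inner} $(\omega^n, \omega^k)$-sub-grouping $\langle G_{i, 0}, \dots, G_{i, \ell_i - 1}\rangle$. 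Concatenating the inner sub-groupings in lexicographic order produces the candidate $(\omega^n, \omega^{k+1})$-grouping of $X$. Conditions~1, 2, and 4 of the grouping definition follow directly (condition~4 via the outer grouping when two sub-groups come from distinct $G_i$'s, and via the inner sub-grouping otherwise). To verify condition~3, set $M_i := \{\max G_{i, j}\}_j$ and $Y := \bigcup_i M_i$; each $M_i$ is $\omega^k$-large by the inner IH, and the outer IH ensures $\ell \ge \max G_0 + 1 \ge \max G_{0, 0} + 1 = \min Y + 1$, so $M_1, \dots, M_{\min Y}$ all exist as valid pieces. Their union is $\omega^k \cdot \min Y$-large by the converse direction of Lemma~\ref{lem:alpha-large-partition}, so the superset $Y \setminus \{\min Y\}$ is also $\omega^k \cdot \min Y$-large by Lemma~\ref{lem:alpha-large-subset}; by the definition of largeness, $Y$ is $\omega^{k+1}$-large.

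The main obstacle is that this reduction cannot handle the case $k = 1$ (for which the outer step would require the theorem for $k = 1$ itself), so $k = 1$ must be proved directly as a second base case. Here, I would use an iterative construction: build groups $F_0, F_1, \dots$ one at a time while maintaining a \emph{compatibility residual} $R$ with the invariant that for each already-built $F_j$ there is a fixed colour $c_j$ with $P(x, y) = c_j$ for all $x \in F_j$ and $y \in R$; this invariant automatically secures condition~4 for all subsequently-built groups. To build each new $F_i$: iteratively select $\min R$ and thin the tail using Lemma~\ref{lem:partition-large} to fix the forward colour, producing a pre-homogeneous candidate $F_i^{*}$ that is $\omega^n \cdot 2$-large; then a final application of Lemma~\ref{lem:partition-large} to $F_i^{*}$ (coloured by its elements' forward colours) extracts an $\omega^n$-large monochromatic subset $F_i \subseteq F_i^{*}$, while $R$ is updated to the corresponding thinned tail. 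The delicate part, and the heart of the argument, is the quantitative bookkeeping: tracking the largeness of $R$ through the Cantor normal form across these operations and showing that the $\omega^6$ overhead above $\omega^n$ suffices to build $\omega$-many such groups.
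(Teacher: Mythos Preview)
Your inductive step (reducing $k+1$ to the cases $1$ and $k$ via an outer $(\omega^{n+6k},\omega)$-grouping and inner $(\omega^n,\omega^k)$-groupings, then concatenating) is essentially identical to the paper's argument, and your verification of condition~3 is correct.

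The genuine gap is in the base case $k=1$. Your proposed sequential construction---repeatedly ``select $\min R$ and thin the tail'' until $F_i^*$ is $\omega^n\cdot 2$-large---does not stay within the $\omega^6$ budget, and there is no evident way to make it do so. Each select-and-thin step, via Lemma~\ref{lem:partition-large}, costs a factor of $4$ in the coefficient of the tail's largeness: starting from an $\omega^{n+5}\cdot c$-large tail, after $t$ such steps one is down to $\omega^{n+5}\cdot\lfloor c/4^t\rfloor$. Hence only about $\log_4(\min X)$ elements can be collected before the coefficient vanishes---far too few for $F_i^*$ to be $\omega^n\cdot 2$-large, which already for $n=1$ requires on the order of $\min X$ elements. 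Even granting a cheap construction of a single $F_i$, your scheme must then produce roughly $\max F_0$ groups to make $\{\max F_i\}$ $\omega$-large, and $\max F_0$ is a tower in $\min X$; a fixed $\omega^6$ overhead cannot pay for that many iterations of a procedure that consumes any positive power of $\omega$ per group.

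The paper handles $k=1$ quite differently (Lemmas~\ref{lem1:stabilize-colouring}--\ref{lem3:ome-n-ome-grouping}). It first obtains a length-$d$ grouping for any $d\le\min X$ from $\omega^{n+3}$-largeness by a two-pass \emph{batch} stabilization: partition $X$ into $d$ blocks, then for each block stabilize colours from all earlier elements at once (one power of $\omega$), then stabilize from above against one representative per later block (another power of $\omega$). A second application of this lemma---with the first pass producing two fat groups and the second refining the later one into $\max Y_0$ pieces---bootstraps to an $(\omega^n,\omega)$-grouping from $\omega^{n+6}$. The crucial ideas your sketch lacks are the batching (so that stabilizing against many elements costs a single power of $\omega$ rather than one per element) and the two-level trick that lets the needed number of groups be read off from an element of $X$ rather than fixed in advance.
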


To obtain a grouping, we need to stabilize the colour between elements of any two fixed groups.
We first show how to stabilize the colour between one set and each individual element of another set. 
This will have to be done both ``from below'' and ``from above''.
\begin{lem}
\label{lem1:stabilize-colouring}
Let $X\subseteq_{\fin}\N$ be $\ome^{n+1}$-large and $\exp$-sparse, and let $c\in\N$ such that $4^{c}\le \min X$.
Then, we have the following.
\begin{enumerate}
 \item For any $W\subseteq_{\fin}\N$ such that $|W|\le c$ and $\max W<\min X$ and for any colouring $P:[W\cup X]^{2}\to 2$, there exists $Y\subseteq X$ such that $Y$ is $\ome^{n}$-large and $P(w,y)=P(w,y')$ for any $w\in W$ and $y,y'\in Y$.
 \item For any $W\subseteq_{\fin}\N$ such that $|W|\le c$ and $\max X<\min W$ and for any colouring $P:[ X\cup W]^{2}\to 2$, there exists $Y\subseteq X$ such that $Y$ is $\ome^{n}$-large and $P(y,w)=P(y',w)$ for any $w\in W$ and $y,y'\in Y$.
\end{enumerate}
\end{lem}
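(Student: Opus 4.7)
The plan is to stabilize the colour $P(w,\cdot)$ for each $w\in W$ one at a time, spending a factor of $4$ in the coefficient of $\omega^n$ per element of $W$. The key tool is Lemma~\ref{lem:partition-large}.2, the asymmetric halving for exp-sparse sets; the initial reduction relies on Lemma~\ref{lem:alpha-large-partition}.

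For part~1, I would first replace $X$ by a subset $X_0$ that is $\omega^n\cdot 4^c$-large. Since $X$ is $\omega^{n+1}$-large, $X\setminus\{\min X\}$ is $(\omega^n\cdot\min X)$-large, and by Lemma~\ref{lem:alpha-large-partition} it splits as a disjoint union of $\min X$ consecutive $\omega^n$-large blocks. Using $4^c\le\min X$, taking the union of the first $4^c$ of these blocks gives an $X_0\subseteq X$ that is $\omega^n\cdot 4^c$-large, and as a subset of $X$ it is still exp-sparse.

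Next I would enumerate $W=\{w_0,\ldots,w_{\ell-1}\}$ with $\ell\le c$ and build a chain $X_0\supseteq X_1\supseteq\cdots\supseteq X_\ell$ inductively. Given $X_i$ that is $\omega^n\cdot 4^{c-i}$-large and exp-sparse, split $X_i = Y_i^0\sqcup Y_i^1$ according to the value of $P(w_i,\cdot)$, and, applying Lemma~\ref{lem:partition-large}.2 with $k=4^{c-i-1}$, let $X_{i+1}$ be whichever of $Y_i^0, Y_i^1$ is $\omega^n\cdot 4^{c-i-1}$-large. This remains exp-sparse as a subset of $X$. After $\ell$ steps, $X_\ell$ is $\omega^n\cdot 4^{c-\ell}$-large with $c-\ell\ge 0$, hence $\omega^n$-large, and by construction $P(w_i,\cdot)$ is constant on $X_\ell$ for every $i<\ell$. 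Setting $Y := X_\ell$ finishes part~1. Part~2 is proved by exactly the same argument with the roles reversed, stabilizing $P(\cdot, w_i)$ instead; no hypothesis on $\min W$ relative to the colouring is needed because Lemma~\ref{lem:partition-large}.2 is symmetric in the two sides of the partition.

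I do not anticipate any serious obstacle: the quantitative content of the lemma has been engineered to match exactly what Lemma~\ref{lem:partition-large}.2 delivers over $c$ iterations, namely a loss of a factor of $4^c$, which is precisely what the hypothesis $4^c\le\min X$ pays for. The mild bookkeeping concern is just that at each step we must verify exp-sparseness (automatic, since subsets of exp-sparse sets are exp-sparse) and that the largeness coefficient $4^{c-i}$ is still positive at step $i=\ell-1$, which holds because $\ell\le c$.
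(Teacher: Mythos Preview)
Your proposal is correct and follows essentially the same approach as the paper: reduce to an $\omega^n\cdot 4^c$-large exp-sparse subset of $X$, then iteratively apply Lemma~\ref{lem:partition-large}.2 once per element of $W$, losing a factor of $4$ in the coefficient each time. The only cosmetic differences are that the paper assumes without loss of generality $|W|=c$ and takes $Y_{-1}=X\setminus\{\min X\}$ directly as the starting set (it is already $\omega^n\cdot 4^c$-large since $\min X\ge 4^c$), whereas you keep track of $\ell\le c$ and pass through an explicit block decomposition; neither difference is material.
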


\begin{proof}
We only show 1., as the proof of 2.~is virtually identical. 
Since $X$ is $\ome^{n+1}$-large and $4^{c}\le \min X$, we know that $X\setminus\{\min X\}$ is $\ome^{n}\cdot 4^{c}$-large.
Put $Y_{-1}=X\setminus\{\min X\}$.
Without loss of generality, we may assume that $|W|=c$, so let $\{w_{i}: i<c\}$ be an enumeration of $W$.
Construct a sequence $Y_{0}\supseteq Y_{1}\supseteq\dots\supseteq Y_{c}$ so that $Y_{i+1}$ is $\ome^{n}\cdot 4^{c-i-1}$-large and $\A y,y'\!\in\! Y_{i+1}(P(w_{i},y)=P(w_{i},y'))$.
Indeed, Lemma~\ref{lem:partition-large} guarantees that at least one of $\{y\in Y_{i}: P(w_{i},y)=0\}$ or $\{y\in Y_{i}: P(w_{i},y)=1\}$ can be chosen as $Y_{i+1}$.
Take $Y_{c}$ as the desired set $Y$.
\end{proof}

Next, we obtain a constant-length grouping.
\begin{lem}
\label{lem2:ome-n-c-grouping}
Let $X\subseteq_{\fin}\N$ be $\ome^{n+3}$-large and $\exp$-sparse, and let $d\in\N$ such that ${d}\le \min X$.
Then, $X$ admits an $(\ome^{n},d)$-grouping.
\end{lem}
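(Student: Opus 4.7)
I would prove this lemma by induction on $d$, with the base case $d = 1$ (or $d = 0$) being immediate: since $\omega^n \le \omega^{n+3}$, any $\omega^n$-large initial segment of $X$ (which exists by Lemma \ref{lem:alpha-large-subset}) serves as $F_0$.

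For the inductive step, my plan is to peel one $\omega^{n+2}$-block off the front of $X$, use it to build $F_0$, and simultaneously stabilise the colour between $F_0$ and the remainder so that the recursion on the remainder is essentially decoupled. Concretely, by Lemma \ref{lem:alpha-large-partition} applied to the decomposition of $X \setminus \{\min X\}$ (which is $\omega^{n+2} \cdot \min X$-large) into consecutive $\omega^{n+2}$-large blocks, I isolate the first block $X_0$ and call the rest $X^*$. Inside $X_0$ I pick an $\omega^{n+1}$-large candidate $\tilde F_0$ and then apply Lemma \ref{lem1:stabilize-colouring}.1 with $W = \tilde F_0$ and $X = X^*$: the side condition $4^{|\tilde F_0|} \le \min X^*$ is furnished by $\exp$-sparseness, since $|\tilde F_0| \le |X_0| \le \max X_0 < \log_4 \min X^*$. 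This produces a refined subset $X^{**} \subseteq X^*$ on which the function $c(x) := P(x, y)$ is well defined and independent of $y \in X^{**}$, for each $x \in \tilde F_0$. A single application of Lemma \ref{lem:partition-large} to $\tilde F_0$, split by the binary value of $c$, yields an $\omega^n$-large subset $F_0 \subseteq \tilde F_0$ on which $c$ is identically some constant $c^*$, so that $P(x, y) = c^*$ throughout $F_0 \times X^{**}$. The inductive hypothesis applied inside $X^{**}$ then supplies an $(\omega^n, d)$-grouping $\langle F_1, \ldots, F_d\rangle$; the combined family $\langle F_0, F_1, \ldots, F_d\rangle$ is the desired $(\omega^n, d+1)$-grouping, with the grouping condition for pairs $(0, j)$ enforced by the uniform colour $c^*$.

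The main obstacle is the largeness budget: after peeling off $X_0$ and paying the one-order cost of Lemma \ref{lem1:stabilize-colouring}, the remainder $X^{**}$ will not in general be literally $\omega^{n+3}$-large, so the inductive hypothesis in its stated form cannot be invoked directly. The natural remedy is to prove a more flexible induction hypothesis --- for instance one allowing $\omega^{n+2} \cdot k$-largeness as input for groupings of length $k$, which is implied by the hypothesis of Lemma \ref{lem2:ome-n-c-grouping} via downward absoluteness since $\omega^{n+2} \cdot d \le \omega^{n+3}$ when $d \le \min X$ --- and to track coefficients carefully through the proof of Lemma \ref{lem1:stabilize-colouring}, which is really coefficient-preserving in the sense that from $\omega^n \cdot (m \cdot 4^{|W|})$-largeness one can extract an $\omega^n \cdot m$-large witness. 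The delicate part is matching these coefficients against the $\min X$-driven bounds so that the induction closes within the allotted budget; I expect this bookkeeping, rather than any new idea beyond Lemmas \ref{lem1:stabilize-colouring} and \ref{lem:partition-large}, is where the real technical work lies.
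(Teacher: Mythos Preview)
Your recursive plan has a genuine gap, and it is not merely bookkeeping. Each step of your induction applies Lemma~\ref{lem1:stabilize-colouring} to the \emph{entire} remainder $X^*$ with $W=\tilde F_0$, and that lemma costs one full power of $\omega$: from an $\omega^{m+1}$-large input one obtains an $\omega^{m}$-large stabilised subset. So if $X^*$ is $\omega^{n+2}\cdot k$-large, the output $X^{**}$ is only guaranteed to be $\omega^{n+1}$-large, which is far short of the $\omega^{n+2}\cdot k$-largeness your flexible induction hypothesis would need. The coefficient-preserving refinement you propose does not rescue this: to keep the coefficient~$k$ you would need $X^*$ to be $\omega^{n+2}\cdot(k\cdot 4^{|\tilde F_0|})$-large, but $|\tilde F_0|$ is of order $\max X_0$, not of order $k$, so this demand is astronomically larger than what is available. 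The net effect is that your recursion loses one power of $\omega$ at every step, so building a $d$-grouping would require $X$ to be roughly $\omega^{n+2+d}$-large rather than $\omega^{n+3}$-large.

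The paper avoids recursion on $d$ entirely and instead uses a direct two-pass construction on the $d$ blocks $X_0,\ldots,X_{d-1}$. In the first pass, each $X_i$ is shrunk \emph{in parallel} to an $\omega^{n+1}$-large $Y_i$ on which $P(x,y)$ is independent of $y\in Y_i$ for every $x$ below $Y_i$; this is one application of Lemma~\ref{lem1:stabilize-colouring} per block, costing a single power of $\omega$ overall. The key idea you are missing is the second pass: since $P(x,y)$ for $y\in Y_j$ now depends only on $x$, it suffices to shrink each $Y_i$ to $Z_i$ so that $P(z,\min Y_j)$ is constant in $z\in Z_i$ for the at most $d-1$ representatives $\min Y_j$ with $j>i$. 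This is Lemma~\ref{lem1:stabilize-colouring} with $|W|<d\le\min X$, hence cheap --- one further power of~$\omega$. Stabilising against $d$ fixed points rather than against an entire $\omega^{n+1}$-large set is what turns the $d$-fold sequential loss of your approach into a constant two-fold loss.
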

\begin{proof}
Fix a colouring $P:[X]^{2}\to 2$.
We will construct an $(\ome^{n},d)$-grouping for $P$.

First, we stabilize the colour from below in the sense of Lemma \ref{lem1:stabilize-colouring}.
Since $d\le \min X$, we know that  $X\setminus \{\min X\}$ is $\ome^{n+2}\cdot d$-large.
Take a partition $X\setminus \{\min X\}=X_{0}\sqcup\dots\sqcup X_{d-1}$ so that $\max X_{i}<\min X_{i+1}$ and $X_{i}$ is $\ome^{n+2}$-large.
Put $Y_{0}=X_{0}$, and for $i\ge 1$ take $Y_{i}\subseteq X_{i}$ so that $Y_{i}$ is $\ome^{n+1}$-large and $P(x,y)=P(x,y')$ for any $x\in X\cap [0,\max X_{i-1}]$ and any $y,y'\in Y_{i}$. This can be done using Lemma~\ref{lem1:stabilize-colouring}.1.~with $W=X\cap [d,\max X_{i-1}]$ and $c = \max X_{i-1}$, 
because $4^{\max X_{i-1}}<\min X_{i}$ by the $\exp$-sparseness of $X$.
Then, $\langle Y_{i}: i<d \rangle$ is a family of $\ome^{n+1}$-large sets such that for any $0\le i<j<d$ and for any $x\in Y_{i}$, $y,y'\in Y_{j}$, we have $P(x,y)=P(x,y')$.

Now, we stabilize the colour from above.
Note that $4^{d}\le \min Y_{i}$ for each $i<d$, because $d\le \min X<\min Y_{i}$ and all $Y_i$ are subsets of $X$ which is exp-sparse.
Put $Z_{d-1}=Y_{d-1}$, and for $i< d-1$ take $Z_{i}\subseteq Y_{i}$ so that $Z_{i}$ is $\ome^{n}$-large and $P(z,x)=P(z',x)$ for any $x\in \{\min Y_{j}: i<j<d\}$ and any $z,z'\in Z_{i}$. This can be done using Lemma~\ref{lem1:stabilize-colouring}.2.~with $W=\{\min Y_{j}: i<j<d\}$ and $c = d-i-1$.
Then, $\langle Z_{i}: i<c \rangle$ is a family of $\ome^{n}$-large sets, and for any $0\le i<j<d$ and any $x,x'\in Z_{i}$, $y,y'\in Z_{j}$, we have $P(x,y)=P(x,\min Y_{j})=P(x',\min Y_{j})=P(x',y')$.
Thus, $\langle Z_{i}: i<c \rangle$ is an $(\ome^{n},c)$-grouping for $P$.
\end{proof}

By applying Lemma \ref{lem2:ome-n-c-grouping} twice, we obtain an $\omega$-length grouping.
\begin{lem}
\label{lem3:ome-n-ome-grouping}
Let $X\subseteq_{\fin}\N$ be $\ome^{n+6}$-large and $\exp$-sparse.
Then, $X$ admits an $(\ome^{n},\ome)$-grouping.
\end{lem}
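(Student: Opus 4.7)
The plan is to apply Lemma~\ref{lem2:ome-n-c-grouping} twice in a two-level fashion: first at an inflated exponent to get a coarse grouping of $X$ whose blocks are themselves $\omega^{n+3}$-large, and then, inside each block, at the target exponent $n$. Concatenating the inner groupings should yield a sequence of $\omega^n$-large groups whose total number, by design, exceeds the maximum of the first block, which is exactly the $\omega$-largeness requirement for the max-set.

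First, since $X$ is $\omega^{n+6}$-large and $\exp$-sparse, Lemma~\ref{lem2:ome-n-c-grouping} applied with the exponent $n+3$ in place of $n$ and with parameter $d=\min X$ produces an $(\omega^{n+3},\min X)$-grouping $\langle G_i : i < \min X\rangle$ of $X$ for the given colouring $P$. Each $G_i$ is itself $\omega^{n+3}$-large and $\exp$-sparse as a subset of $X$, so a further application of Lemma~\ref{lem2:ome-n-c-grouping} to $G_i$, now with target exponent $n$ and parameter $\min G_i$, yields an $(\omega^n,\min G_i)$-grouping $\langle F_{i,j} : j < \min G_i\rangle$ of $G_i$ for the restriction of $P$ to pairs from $G_i$. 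I would then concatenate all the $F_{i,j}$'s in lexicographic order of $(i,j)$ to obtain a sequence of $\omega^n$-large subsets of $X$, automatically ordered by max.

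The cross-colouring condition of the grouping definition comes essentially for free: for two distinct blocks $F_{i,j}, F_{i',j'}$, if $i=i'$ then both lie inside $G_i$ and the inner grouping supplies constant cross-colouring, while if $i\ne i'$ then they lie in distinct $G$'s and the outer grouping supplies it.

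The step that needs real attention---and the place where the choice of parameters pays off---is the $\omega$-largeness of the max-set $M := \{\max F_{i,j}\}$. Its minimum is $\max F_{0,0}\le \max G_0$, and by the outer grouping $\max G_0 < \min G_1$. The total length of the concatenated sequence is $\sum_{i<\min X}\min G_i$, which (using $\min X\ge 3 \ge 2$) is at least $\min G_0+\min G_1 > \max G_0 \ge \max F_{0,0}$. Hence the cardinality of $M$ strictly exceeds its minimum, so $M$ is $\omega$-large, and we have the desired $(\omega^n,\omega)$-grouping.
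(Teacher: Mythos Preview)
Your proof is correct and follows essentially the same two-level strategy as the paper: apply Lemma~\ref{lem2:ome-n-c-grouping} once at exponent $n+3$ to get coarse blocks, then again at exponent $n$ inside the blocks. The paper's version is a bit leaner --- it takes only a $(\omega^{n+3},2)$-grouping $\langle Y_0,Y_1\rangle$, subdivides only $Y_1$ into $\max Y_0$ many $\omega^n$-large groups, and prepends $Y_0$ as a single extra group --- whereas you subdivide all $\min X$ outer blocks; both routes yield the required $\omega$-largeness of the max-set for the same reason ($\max G_0<\min G_1$), so the difference is purely cosmetic.
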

\begin{proof}
Fix a colouring $P:[X]^{2}\to 2$.
By Lemma \ref{lem2:ome-n-c-grouping}, since $2\le \min X$, there is an $(\ome^{n+3},2)$-grouping $\langle Y_{0},Y_{1} \rangle$ for $P$. Again by Lemma~\ref{lem2:ome-n-c-grouping}, since $\max Y_{0}<\min Y_{1}$, there is an $(\ome^{n},\max Y_{0})$-grouping $\langle Z_{i}: i<\max Y_{0} \rangle$ for $P$ with $Z_i \subseteq Y_1$ for each $i$. 
One can easily check that $\langle Y_{0}, Z_{0},\dots,Z_{\max Y_{0}-1} \rangle$ is an $(\ome^{n},\ome)$-grouping for $P$.
\end{proof}

Finally we prove Theorem~\ref{thm:grouping-main} by using the previous lemma repeatedly.
\begin{proof}[\it Proof of Theorem~\ref{thm:grouping-main}.]
We prove the statement by induction on $k$.
The case $k=0$ is trivial, and the case $k=1$ is Lemma~\ref{lem3:ome-n-ome-grouping}.
Assume that $k\ge 2$ and let $X\subseteq_{\fin}\N$ be $\ome^{n+6k}$-large and $\exp$-sparse.
Fix a colouring $P:[X]^{2}\to 2$.
By Lemma~\ref{lem3:ome-n-ome-grouping}, there is an $(\ome^{n+6(k-1)},\omega)$-grouping $\langle Y_{i}:i\le \ell \rangle$ for $P$.
Since $\{\max Y_{i}: i\le \ell\}$ is $\ome$-large, we know that $\ell\ge \max Y_{0}$.
By the induction hypothesis, for each $1\le i\le \ell$ there is an $(\ome^{n},\ome^{k-1})$-grouping $\langle Z^{i}_{j}: j\le m_{i} \rangle$ for $P$ such that $Z^{i}_{j}\subseteq Y_{i}$ for each $j$.
Since $\{\max Z^{i}_{j}: j\le m_{i}\}$ is $\ome^{k-1}$-large for any $1\le i\le \ell$, the set $\{\max Y_{0}\}\cup\{\max Z^{i}_{j}: j\le m_{i},1\le i\le\ell\}$ is $\ome^{k}$-large.
One can check that $\langle Y_{0},Z^{1}_{0},\dots,Z^{1}_{m_{1}},\dots,Z^{\ell}_{0},\dots,Z^{\ell}_{m_{\ell}} \rangle$ is an $(\ome^{n},\ome^{k})$-grouping for $P$.
\end{proof}

\subsection{Proof of Theorem~\ref{KS-theorem}}\label{subsec:ket-sol}
In this subsection, we give a simple proof of Theorem~\ref{KS-theorem}. The proof is still based on the original idea in \cite{KS81}, but the calculation is simplified. We include the argument to make the paper more self-contained and to facilitate the discussion of axiomatic requirements in Section~\ref{sec:formalization}.

For a given $P:[X]^{2}\to n$ and $x\in X$, define the \emph{hereditarily minimal prehomogeneous (h.m.p.h.) sequence} $\sigma_{x}\in [X]^{<\N}$ as follows:
\begin{align*}
 \sigma_{x}(0)&=\min X,\\
 \sigma_{x}(i+1)&=\min \{y\in X: y>\sigma_{x}(i)\wedge \A j\le i\,P(\sigma_{x}(j),x)=P(\sigma_{x}(j),y)
\},\\
 &\mbox{stop this construction when $\sigma_{x}(i)=x$.}
\end{align*}
One can easily check the following from the definition.
\begin{itemize}
 \item For any $i<j<k<|\sigma_{x}|$, $P(\sigma_{x}(i),\sigma_{x}(j))=P(\sigma_{x}(i),\sigma_{x}(k))$.
 \item $\sigma_{x}(i)=y<x$ if and only if $\sigma_{y}=\sigma_{x}\rest_{i+1} \neq \sigma_x$. In particular, any nonempty initial segment of $\sigma_x$ has the form $\sigma_y$ for some $y <x$.
\end{itemize}
For a given colour $c<n$, let $\ho(\sigma_{x},c)=\{\sigma_{x}(i): i<|\sigma_{x}|-1\wedge P(\sigma_{x}(i),x)=c\}$. The set $\ho(\sigma_{x},c)\cup\{x\}$ is $P$-homogeneous with colour $c$.
We let $\col(\sigma_{x})=\{c<n: \ho(\sigma_{x},c)\neq\emptyset\}$.
Clearly, $\sigma_{x}\subseteq \sigma_{y}$ implies $\col(\sigma_{x})\subseteq\col(\sigma_{y})$.
{For $x \in X \setminus\{\min X\}$, we write $\sigma_{x}^{-}$ to denote the longest initial segment $\sigma_{y}\subsetneq\sigma_{x}$ such that $\col(\sigma_{y})\subsetneq\col(\sigma_{x})$. Note that this definition would not make sense for $x = \min X$, because $\col(\sigma_{\min X}) = \emptyset$.}
\begin{lem}\label{lem:hmph-properties}
Let, $n\ge 2$, $X\subseteq_{\fin}\N$ and let $P:[X]^{2}\to n$ be a colouring.
Then we have the following.
\begin{enumerate}
 \item For any $m\in\N$, $|\{x\in X: |\sigma_{x}|\le m\}|\le n^{m}$.
 \item For any $x\in X$ and $c\in \col(\sigma_{x})$, $\min \ho(\sigma_{x},c)\le \sigma_{x}^{-}(|\sigma_{x}^{-}|-1)$. 
\end{enumerate}
\end{lem}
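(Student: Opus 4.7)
For part 1, my plan is to encode each $x \in X$ injectively by the colour sequence $\vec{c}^{x} := \bigl(P(\sigma_{x}(0),x), P(\sigma_{x}(1),x),\ldots,P(\sigma_{x}(|\sigma_{x}|-2),x)\bigr) \in n^{|\sigma_{x}|-1}$. Since $\sigma_{x}(0) = \min X$ is fixed and the recursive definition of $\sigma_{x}$ determines $\sigma_{x}(i+1)$ from $\sigma_{x}(0),\ldots,\sigma_{x}(i)$ together with the first $i+1$ prescribed colours (as the minimum element of $X$ satisfying the stated constraints), the entire sequence $\sigma_{x}$, and hence $x = \sigma_{x}(|\sigma_{x}|-1)$, is recoverable from $\vec{c}^{x}$. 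This makes $x \mapsto \vec{c}^{x}$ injective, so $|\{x \in X : |\sigma_{x}| \le m\}| \le \sum_{k=0}^{m-1} n^{k} \le n^{m}$ for $n \ge 2$.

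For part 2, the idea is to look at the initial segment of $\sigma_{x}$ one step longer than $\sigma_{x}^{-}$ and exploit maximality. Set $k = |\sigma_{x}^{-}|$ and $y = \sigma_{x}(k)$, so that by the second bullet $\sigma_{y} = \sigma_{x}\!\restriction\!(k+1)$; this coincides with $\sigma_{x}$ when $k+1 = |\sigma_{x}|$ (i.e.\ $y = x$), and is otherwise a proper initial segment strictly longer than $\sigma_{x}^{-}$. In either case the maximality clause in the definition of $\sigma_{x}^{-}$ forbids $\col(\sigma_{y}) \subsetneq \col(\sigma_{x})$, so $\col(\sigma_{y}) = \col(\sigma_{x})$ and in particular $c \in \col(\sigma_{y})$. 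Now the prehomogeneity identity, applied with indices $i < j = k < j' = |\sigma_{x}|-1$, yields $P(\sigma_{x}(i),y) = P(\sigma_{x}(i),x)$ for every $i < k$ (the edge case $y = x$ being automatic). Consequently $\ho(\sigma_{y},c) = \{\sigma_{x}(i) : i < k,\ P(\sigma_{x}(i),x) = c\}$ is a nonempty subset of $\ho(\sigma_{x},c)$, and every element of it is at most $\sigma_{x}(k-1) = \sigma_{x}^{-}(|\sigma_{x}^{-}|-1)$, giving the desired bound.

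The main obstacle I anticipate is the index bookkeeping in part 2: keeping $\sigma_{x}^{-}$ (length $k$, last value $\sigma_{x}(k-1)$) properly distinguished from its one-element extension $\sigma_{y} = \sigma_{x}\!\restriction\!(k+1)$ (last value $\sigma_{x}(k) = y$), handling the boundary case $k+1 = |\sigma_{x}|$ where $y = x$, and applying the prehomogeneity identity with the correct triple of indices. Part 1 is essentially a counting argument, and once these indexing details are set up correctly in part 2, the remainder of the argument is just unpacking definitions.
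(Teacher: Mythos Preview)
Your proof is correct. Part~1 is essentially the paper's argument in different dress: where the paper observes that each node $\sigma_x$ in the tree of h.m.p.h.~sequences has at most $n$ one-step extensions (because distinct immediate successors get distinct colours from their common predecessor), you package the same branching information into the colour word $\vec c^{\,x}$ and use it as an injective code. The bound $\sum_{k<m} n^k \le n^m$ is identical.

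For part~2 the two arguments run in opposite directions. The paper chooses $y=\max\{\min\ho(\sigma_x,c):c\in\col(\sigma_x)\}$ and argues that $\col(\sigma_y)\subsetneq\col(\sigma_x)$ (since the colour $c_0$ realising the maximum cannot yet appear in $\col(\sigma_y)$), whence $\sigma_y\subseteq\sigma_x^{-}$ directly gives the bound. You instead start from $\sigma_x^{-}$, extend by one step to $\sigma_y$, and use the maximality of $\sigma_x^{-}$ to force $\col(\sigma_y)=\col(\sigma_x)$, so that a witness for colour $c$ already sits among the first $k$ entries. Both routes are short and use exactly the same ingredients (prehomogeneity and the definition of $\sigma_x^{-}$); the paper's version is a line shorter because it avoids the case split on whether $y=x$, while yours makes the role of the maximality of $\sigma_x^{-}$ more explicit. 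Your implicit use of $k\ge 1$ is justified, since $\sigma_{\min X}$ is always a candidate for $\sigma_x^{-}$ once $\col(\sigma_x)\neq\emptyset$.
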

\begin{proof}
By the definition of h.m.p.h.~sequences,  if $\sigma_{y}=\sigma_{x}^{\frown}\langle y \rangle$ and $\sigma_{z}=\sigma_{x}^{\frown}\langle z \rangle$, then $P(x,y)\neq P(x,z)$.
Thus, for any $x\in X$, there are at most $n$-many $y$'s in $X$ such that $y>x$, $\sigma_{y}\supseteq \sigma_{x}$ and $|\sigma_{y}|=|\sigma_{x}|+1$.
Hence the size of $\{x\in X: |\sigma_{x}|\le m\}$ is at most $1+ n +\dots + n^{m-1} \le n^m$, which gives 1.

For a given $x\in X$, put $y=\max \{\min \ho(\sigma_{x},c): c\in \col(\sigma_{x})\}$.
Then $\col(\sigma_{y})\subsetneq \col(\sigma_{x})$.
Thus, $\sigma_{y}\subseteq \sigma_{x}^{-}$, and we have 2.
\end{proof}

\begin{proof}[\it Proof of Theorem~\ref{KS-theorem}.]
Let $X_{0}\subseteq_{\fin}\N$ be $\ome^{n+4}$-large and $\min X_{0}\ge 3$.
Then one can find a subset $X\subseteq X_{0}$ which is $\ome^{n}+1$-large, $\ome^{3}$-sparse and such that $\min X>n$.
Indeed, $X'_{0}=X_{0}\setminus\{\min X_{0}\}$ is at least $\ome^{n+3}\cdot 3$-large. Put $X'_{1}=X'_{0}\rest \ome^{n+3}$, $X'_{2}=(X'_{0}\setminus X'_{1})\rest \ome^{n+3}$ and $X'_{3}=(X'_{0}\setminus X'_{1}\cup X'_{2})\rest \ome^{n+3}$.
Note that $|X'_{1}|>n$. By Lemma \ref{lem:alpha-sparse}, the set $\{\min X'_{2},\max X'_{2}\}$ is $\ome^{3}$-sparse. Moreover, $\{\max X'_{2}\}\cup X'_{3}$ is $\ome^{n+3}+1$-large, so it contains an $\ome^{n}$-large $\ome^{3}$-sparse subset $X''$.
We can take $X=\{\min X'_{2}\}\cup X''$ as the desired set.

Now we show that $X$ chosen as above is $\RT^{2}_{n}$-$\ome$-large by way of contradiction.
Assume that $P:[X]^{2}\to n$ is a colouring with no $\ome$-large homogeneous set.
Write $X = \{ x_0 < \dots < x_{\ell-1} \}$.
Let $\sigma_{i}:=\sigma_{x_{i}}$ be the h.m.p.h.~sequence defined by $P$ and $x_{i}$.
For each $1\le d\le n$, we say that $i<\ell$ is $d$-critical if $|\col(\sigma_{i})|=d$ and for any $j<i$, $\sigma_{i}^{-}\neq \sigma_{j}^{-}$.
For $1\le i<\ell$ and $1\le d\le n$, define an ordinal $\gamma_{i}^{d}<\ome^{n}$ as follows.
If no $j\le i$ is $d$-critical,
put $\gamma_{i}^{d}=0$.
Otherwise, take the largest $d$-critical number $j_{0}\le i$ and let $m_{i,1}^{d}=|\{k\le i: |\col(\sigma_{k})|=d\}|$, $m_{i,2}^{d}=|\{k\le i: k$ is $(d+1)$-critical$\}|$ (where $m_{i,2}^{d}=0$ for $d=n$); then put $\gamma_{i}^{d}=\ome^{n-d}\cdot (x_{j_{0}}-m_{i,1}^{d}-m_{i,2}^{d})$.
\begin{claim*}
If there is a $d$-critical number $j\le i$, then $\gamma_{i}^{d}>0$.
\end{claim*}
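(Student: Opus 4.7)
The claim is the inequality $x_{j_{0}}>m_{i,1}^{d}+m_{i,2}^{d}$, which is exactly the condition for $\gamma_{i}^{d}$ to be a positive ordinal. My plan is to bound both terms using the no-$\omega$-large-homogeneous hypothesis on $P$, Lemma~\ref{lem:hmph-properties}(2), and the exp-sparseness of $X$ (implied by $\omega^{3}$-sparseness).

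The first step will reduce $m_{i,2}^{d}$ to $m_{i,1}^{d}$. The map sending each $(d+1)$-critical $k\le i$ to the last element of $\sigma_{k}^{-}$ is injective, because the sequences $\sigma_{k}^{-}$ for $(d+1)$-critical indices are pairwise distinct, and it lands in $\{y\in X:|\col(\sigma_{y})|=d,\ y<x_{i}\}$, a set of cardinality at most $m_{i,1}^{d}$. Hence $m_{i,2}^{d}\le m_{i,1}^{d}$, and it suffices to show $2m_{i,1}^{d}<x_{j_{0}}$.

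The second step will bound $m_{i,1}^{d}$ by grouping. I will partition $\{k\le i:|\col(\sigma_{k})|=d\}$ into classes $A_{\sigma}$ indexed by the distinct values $\sigma=\sigma_{k}^{-}$; within each class the corresponding $x_{k}$'s are precisely the entries at positions $|\sigma|,\ldots,|\sigma_{k^{*}_{\sigma}}|-1$ of a single maximal h.m.p.h.~sequence $\sigma_{k^{*}_{\sigma}}$ extending $\sigma$, so that $|A_{\sigma}|=|\sigma_{k^{*}_{\sigma}}|-|\sigma|$. Applying Lemma~\ref{lem:hmph-properties}(2) and the no-$\omega$-large hypothesis to each of the $d$ homogeneous sets $\ho(\sigma_{k^{*}_{\sigma}},c)\cup\{x_{k^{*}_{\sigma}}\}$ for $c\in\col(\sigma_{k^{*}_{\sigma}})$, each is of size at most the last element of $\sigma$; summing over $c$ yields $|A_{\sigma}|\le d\cdot(\text{last element of }\sigma)$.

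Finally, summing over $\sigma$ and using that the last elements of the distinct $\sigma$'s form a set of pairwise distinct elements of $X$ all strictly less than $x_{j_{0}}$ (since each such $\sigma$ is $\sigma_{k}^{-}$ for some $d$-critical $k\le j_{0}$), a geometric-series argument based on exp-sparseness gives $\sum_{\sigma}(\text{last element of }\sigma)\le\sum_{k<j_{0}}x_{k}<2x_{j_{0}-1}$, hence $m_{i,1}^{d}\le 2dx_{j_{0}-1}$. Using $d\le n<\min X\le x_{j_{0}-1}$ together with exp-sparseness $x_{j_{0}}>4^{x_{j_{0}-1}}$, the chain of inequalities $2m_{i,1}^{d}\le 4dx_{j_{0}-1}\le 4x_{j_{0}-1}^{2}<4^{x_{j_{0}-1}}<x_{j_{0}}$ finishes the argument. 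The main obstacle will be in the second step, where one must correctly verify that each class $A_{\sigma}$ is captured by a single maximal h.m.p.h.~sequence and that the bound from Lemma~\ref{lem:hmph-properties}(2) applies uniformly; once the bookkeeping is set up, the super-exponential margin from $\omega^{3}$-sparseness absorbs all accumulated factors with plenty of room.
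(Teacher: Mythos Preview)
Your second step contains a genuine gap. You assert that for each fixed $\sigma$, the class $A_\sigma = \{k \le i : |\col(\sigma_k)| = d,\ \sigma_k^- = \sigma\}$ lies along a \emph{single} h.m.p.h.~sequence $\sigma_{k^*_\sigma}$, so that $|A_\sigma| = |\sigma_{k^*_\sigma}| - |\sigma|$. This is false: the h.m.p.h.~sequences form a tree, and $A_\sigma$ is a subtree above $\sigma$ that can branch. Indeed, once one extends $\sigma$ by an element introducing a new colour (so that $\col$ becomes some set $C$ of size $d$), every subsequent node may have up to $d$ children---one for each colour in $C$---all of which remain in $A_\sigma$. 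For a concrete instance with $n=d=2$: if $\sigma = \sigma_{x_1}$ and $\sigma_{x_2} = \sigma^\frown\langle x_2\rangle$ has $\col = \{0,1\}$, then $\sigma_{x_3} = \sigma^\frown\langle x_2,x_3\rangle$ and $\sigma_{x_4} = \sigma^\frown\langle x_2,x_4\rangle$ with $P(x_2,x_3)\neq P(x_2,x_4)$ are incomparable branches, yet $x_2,x_3,x_4$ all lie in $A_\sigma$.

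The bound on any single branch, $|\sigma_k| - |\sigma| \le d\cdot(\text{last element of }\sigma)$, is correct (and is exactly what the paper extracts from Lemma~\ref{lem:hmph-properties}.2 together with the no-$\omega$-large hypothesis). But because the subtree has branching factor $d$, the cardinality $|A_\sigma|$ can be exponential in this branch length, not linear. Consequently your polynomial estimate $m_{i,1}^d \le 2d\,x_{j_0-1}$ does not follow. The paper handles this correctly by combining the branch-length bound $|\sigma_k| \le n\,x_{j_0-1}$ with the tree-counting Lemma~\ref{lem:hmph-properties}.1 to obtain $m_{i,1}^d \le n^{\,n\,x_{j_0-1}}$; the full strength of $\omega^3$-sparseness (not merely exp-sparseness) is then used to show $x_{j_0} > 2n^{\,n\,x_{j_0-1}}$. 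Your first step ($m_{i,2}^d \le m_{i,1}^d$) matches the paper's and is fine.
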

\noindent \emph{Proof of Claim.} Let $j_{0}\le i$ be the largest $d$-critical number $\le i$; {since $d \ge 1$, we know that $j_0 > 0$}.
Note that for any $k\le i$ such that $|\col(\sigma_{k})|=d$, we have $\sigma_{k}^{-}=\sigma_{j}^-$ for some $j \le j_{0}$ 
(if not, there would be a $d$-critical number bigger than $j_{0}$) and therefore also $\sigma_{k}^{-}=\sigma_{j}$ for some $j<j_{0}$; this implies
$\sigma_{k}^{-}(|\sigma_{k}^{-}|-1) \le x_{j_{0}-1}$. Fix $k\le i$ such that $|\col(\sigma_{k})|=d$.
Then, for any $c\in \col(\sigma_{k})$, $\min \ho(\sigma_{k},c)\le \sigma_{k}^{-}(|\sigma_{k}^{-}|-1) \le x_{j_{0}-1}$, where the first inequality follows from Lemma~\ref{lem:hmph-properties}.2.
Since $\ho(\sigma_{k},c)\cup\{x_{k}\}$ is $P$-homogeneous and thus not $\ome$-large, we have $|\ho(\sigma_{k},c)\cup\{x_{k}\}|\le x_{j_{0}-1}$, and hence $|\sigma_{k}|\le nx_{j_{0}-1}$.
Therefore, by Lemma~\ref{lem:hmph-properties}.1, we have $m_{i,1}^{d}\le n^{nx_{j_{0}-1}}$.

If $k,k'\le i$ are both $d+1$-critical, then $\sigma_{k}^{-}\neq \sigma_{k'}^{-}$ and $|\col(\sigma_{k}^{-})|=|\col(\sigma_{k'}^{-})|=d$.
Thus, $m_{i,2}^{d}\le m_{i,1}^{d}\le n^{nx_{j_{0}-1}}$.
Finally, since $X$ is $\ome^{3}$-sparse and $x_{j_{0}-1}>n$, one can easily check that $x_{j_{0}}>2n^{nx_{j_{0}-1}}\ge m_{i,1}^{d}+m_{i,2}^{d}$.
This completes the proof of the claim.

\medskip

Now, define $\gamma_{0}=\ome^{n}$ and $\gamma_{i}=\gamma_{i}^{1}+\dots+\gamma_{i}^{n}$ for $i = 1, \ldots, \ell-1$.
Note that $1$ is $1$-critical, {because $|\col(\sigma_{1})|=1$ and $\sigma_1^- = \emptyset$ while $\sigma_0^-$ does not exist.} Thus, by the Claim, $\gamma_{i}>0$ for any $i<\ell$.

For $i<\ell-1$, consider the difference between $\gamma_{i}$ and $\gamma_{i+1}$.
Let $d=|\col(\sigma_{i+1})|$. There are two cases:
\begin{itemize}
\item if $i+1$ is $d$-critical, then $\gamma_{i+1}$ is obtained from $\gamma_{i}$ by removing one $\ome^{n-(d-1)}$
and adding at most $x_{i+1}$-many $\ome^{n-d}$'s (note that {if $i > 0$, then $d > 1$ and $\gamma^{d-1}_i > 0$ because there must be a $(d-1)$-critical $j \le i$}),
\item if $i+1$ is not $d$-critical, then $\gamma_{i+1}$ is obtained from $\gamma_{i}$ simply by removing one $\ome^{n-d}$.
\end{itemize}
In either case, $\gamma_{i+1}\le \gamma_{i}[x_{i+1}]$. Note also that $\MC(\gamma_{i})<x_{i+1}$. 
This lets us check by induction that $\gamma_{i}\le \gamma_{0}[x_{1}]\dots[x_{i}]$ for any $1\le i<\ell$.
Indeed, $\gamma_{i}\le \gamma_{0}[x_{1}]\dots[x_{i}]$ and $\MC(\gamma_{i})<x_{i+1}$ 
implies that $\gamma_{i+1}\le\gamma_{i}[x_{i+1}]\le \gamma_{0}[x_{1}]\dots[x_{i}][x_{i+1}]$
by Lemma~\ref{lem:alpha-decreasing}. 
Since $\gamma_{0}=(\ome^{n}+1)[x_{0}]$, we have $0<\gamma_{i}\le (\ome^{n}+1)[x_{0}]\dots[x_{i}]$ for any $i<\ell$.
However, $(\ome^{n}+1)[x_{0}]\dots[x_{\ell-1}]=0$ since $X$ is $\ome^{n}+1$-large. {This implies $0 < \gamma_{\ell-1} \le  0$, which is a contradiction.}
\end{proof}

\subsection{Decomposition of Ramsey's theorem for pairs}
A colouring $P:[X]^{2}\to 2$ is said to be \emph{transitive} if both $P^{-1}(0)$ and $P^{-1}(1)$ are transitive relations on $X$. Here $[X]^2$ is formally understood as the set of ordered pairs from $X$ in which the second element is strictly greater than the first: in other words, for a transitive $P$, if $x<y<z$ and $P(x,y) = P(y,z)$, then $P(x,z)$ must have the same value as well.

Using this notion, $\RT^{2}_{2}$ can be decomposed as $\RT^{2}_{2}=\EM+\ADS$ where
\begin{itemize}
 \item $\EM$: for any colouring $P:[\N]^{2}\to 2$, there exists an infinite set $H\subseteq \N$ such that $P$ is~transitive on $[H]^{2}$,
 \item $\ADS$: for any transitive colouring $P:[\N]^{2}\to 2$, there exists an infinite set $H\subseteq \N$ such that $H$ is $P$-homogeneous.
\end{itemize}
$\EM$ and $\ADS$ were originally introduced as combinatorial principles about ordered graphs and linear orders, respectively;
see \cite{HS2007,BW,Lerman2013Separating}.
We consider a similar decomposition for $\RT^{2}_{2}$-$\alpha$-largeness.
\begin{definition}
Let $\alpha<\ome^{\ome}$.
\begin{enumerate}
 \item A set $X\subseteq_{\fin}\N$ is said to be $\EM$-$\alpha$-large if for any colouring $P:[X]^{2}\to 2$, there exists $Y\subseteq X$ such that $P$ is transitive on $[Y]^{2}$ and $Y$ is $\alpha$-large.
 \item A set $X\subseteq_{\fin}\N$ is said to be $\ADS$-$\alpha$-large if for any transitive colouring $P:[X]^{2}\to 2$, there exists $Y\subseteq X$ such that $Y$ is $P$-homogeneous and $Y$ is $\alpha$-large.
\end{enumerate} 
\end{definition}

We prove Theorem \ref{thm:RT22-largeness-main} by combining appropriate upper bounds for $\EM$-$\alpha$-largeness and $\ADS$-$\alpha$-largeness.

\begin{thm}
\label{thm:EM-largeness}
If $X\subseteq_{\fin}\N$ is $\ome^{36n}$-large and $\exp$-sparse, then it is $\EM$-$\omega^{n}$-large.
\end{thm}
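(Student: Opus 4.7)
The plan is to proceed by induction on $n$; the base case $n = 0$ is immediate, since a singleton subset of $X$ is $1$-large and vacuously $P$-transitive.

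For the inductive step, suppose $X$ is $\omega^{36n}$-large and exp-sparse, and let $P\colon [X]^2 \to 2$. I would first apply Theorem~\ref{thm:grouping-main} with inner parameter $36(n-1)$ and outer parameter $6$ (so that $36(n-1) + 6\cdot 6 = 36n$), obtaining an $(\omega^{36(n-1)}, \omega^6)$-grouping $\langle F_i \rangle_{i\le\ell}$ for $P$. The grouping property makes the inter-group colour well defined, inducing a $2$-colouring $\bar P$ on the outer set $I = \{\max F_i : i\le \ell\}$, which is $\omega^6$-large with $\min I \ge \min X \ge 3$. By Theorem~\ref{KS-theorem} applied to $\bar P$ on $I$ (taking the theorem's parameter $n$ equal to $2$), there is an $\omega$-large $\bar P$-homogeneous $H \subseteq I$ of some constant colour $c$; setting $J = \{j \le \ell : \max F_j \in H\}$ gives a set of groups such that for any $j < j'$ in $J$ and any $x \in F_j, y \in F_{j'}$ one has $P(x,y) = c$, so the selected groups behave as a strong grouping.

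For each $j \in J$, the set $F_j$ is $\omega^{36(n-1)}$-large and exp-sparse as a subset of $X$, so by the inductive hypothesis it contains a $P$-transitive $\omega^{n-1}$-large subset $T_j$. Set $T = \bigcup_{j\in J} T_j$. Then $T$ is $P$-transitive by a case analysis on triples: within a single $T_j$ transitivity holds by the inductive choice, while for a triple spanning two or more $T_j$'s the pair $(x,z)$ with $x,z$ in different groups has colour $c$, and at least one of $(x,y), (y,z)$ is also a cross-group pair of colour $c$, so the implication ``$P(x,y) = P(y,z) \Rightarrow P(x,z) = P(x,y)$'' is either vacuous or immediately verified. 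For $\omega^n$-largeness, let $j_0 = \min J$, so $\min T = \min T_{j_0} \le \max F_{j_0} = \min H$; since $H$ is $\omega$-large, $|J| = |H| > \min H \ge \min T$, and thus $|J| - 1 \ge \min T$. The $|J| - 1$ sets $T_{j_1}, \ldots, T_{j_{|J|-1}}$ are then consecutive (with gaps) $\omega^{n-1}$-large subsets of $T \setminus \min T$; taking the first $\min T$ of them and applying Lemmas~\ref{lem:alpha-large-subset} and~\ref{lem:alpha-large-partition} gives that $T \setminus \min T$ is $\omega^{n-1} \cdot \min T$-large, so $T$ is $\omega^n$-large, as required.

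The main obstacle, I expect, is the final largeness bookkeeping: one must track how the $\omega$-largeness of the outer homogeneous set $H$ guarantees enough pieces to satisfy $|J| - 1 \ge \min T$, and then use the partition and subset lemmas to convert $|J|$-many $\omega^{n-1}$-large pieces into a genuine $\omega^n$-large union rather than a mere $\omega^{n-1}\cdot k$-large one. The constant $36 = 6 \cdot 6$ in the bound arises naturally from this strategy: the outer largeness level $6$ is the smallest for which Theorem~\ref{KS-theorem} produces an $\omega$-large homogeneous subset of a $2$-colouring, while the grouping theorem charges a cost of $6$ per outer level.
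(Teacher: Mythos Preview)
Your proposal is correct and follows essentially the same strategy as the paper: apply the grouping theorem to get an $(\omega^{36(n-1)},\omega^6)$-grouping, use Ketonen--Solovay on the $\omega^6$-large set of group-maxes to extract a strong $\omega$-subgrouping, apply the inductive hypothesis inside each surviving group, and assemble the pieces into an $\omega^n$-large transitive set. The only cosmetic differences are that the paper starts the induction at $n=1$ (invoking Theorem~\ref{KS-theorem} directly for the base case and thereby proving the marginally sharper bound $\omega^{36(n-1)+6}$), and that in the final assembly the paper keeps only $\{\max Z_0\}$ from the first group rather than all of $T_{j_0}$; your bookkeeping via $\min T \le \min H$ and $|J|-1 \ge \min T$ achieves the same conclusion.
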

Note that \cite[Lemma 7.2]{PY} essentially says that for every $n$ there is an $m$ such that
an $\omega^m$-large set is $\EM$-$\omega^n$-large. 
Theorem \ref{thm:EM-largeness} strengthens this by providing a concrete upper bound on $m$,
which is possible thanks to Theorem \ref{thm:grouping-main}. 
\begin{proof}
We follow the proof of \cite[Lemma 7.2]{PY}, replacing the use of \cite[Lemma 7.1]{PY}
by Theorem \ref{thm:grouping-main}.
It is enough to show that if $X$ is $\ome^{36(n-1)+6}$-large and $\exp$-sparse then it is $\EM$-$\omega^{n}$-large.
We prove this by induction on $n$.

The case $n=1$ is just a weakening of Theorem~\ref{KS-theorem}.
Assume that $n\ge 2$ and let $X\subseteq_{\fin}\N$ be $\ome^{36(n-1)+6}$-large.
Fix $P:[X]^{2}\to 2$.
By Theorem~\ref{thm:grouping-main}, there exists an $(\ome^{36(n-2)+6}, \ome^{6})$-grouping $\langle Y_{i}:i\le \ell \rangle$ for $P$.
Theorem~\ref{KS-theorem} applied to the $\ome^{6}$-large set $\{\max Y_{i}: i\le \ell\}$ gives an $(\ome^{36(n-2)+6}, \ome)$-subgrouping $\langle Y_{i_{j}}:j\le \ell' \rangle$ which is strong, {i.e.}~there is a fixed colour $c$ such that for any $x,y$ from different groups, $P(x,y)=c$.
By the induction hypothesis, for each $j\le \ell'$ there is some $Z_{j}\subseteq Y_{i_{j}}$ such that $Z_{j}$ is $\ome^{n-1}$-large and $P$ is transitive on $[Z_{j}]^{2}$.  Since $\max Z_{0}\le \max Y_{i_{0}}\le \ell'$, the set $H=\{\max Z_{0}\}\cup\bigcup_{1\le j\le \ell'}Z_{j}$ is $\ome^{n}$-large.
Moreover, by construction, $P$ is transitive on $[H]^{2}$.
\end{proof}
\begin{thm}
\label{thm:ADS-largeness}
If $X\subseteq_{\fin}\N$ is $\ome^{4n+4}$-large and $\min X\ge 3$, then it is $\ADS$-$\omega^{n}$-large.
\end{thm}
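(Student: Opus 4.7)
The plan is to proceed by induction on $n$. The base case $n = 0$ is immediate, since any singleton $\{x\} \subseteq X$ is $\omega^0 = 1$-large and trivially $P$-homogeneous. For the inductive step, I would first apply Lemma~\ref{lem:alpha-sparse} to replace $X$ by an exp-sparse subset, paying a bounded number of levels in the exponent and reducing the construction to the exp-sparse setting.

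The main construction is an iterative extraction of $\omega^{n-1}$-large blocks followed by a transitivity-powered concatenation. Using Lemma~\ref{lem:alpha-large-partition}, I would partition an appropriate portion of $X \setminus \{\min X\}$ into a sequence of $\omega^{4n}$-large slots $B_j$, and within each slot apply the inductive hypothesis to produce an $\omega^{n-1}$-large $P$-homogeneous block $H_j \subseteq B_j$ with some internal color $c_j \in \{0,1\}$. To control cross-colors between blocks, I would iteratively invoke Lemma~\ref{lem1:stabilize-colouring} to stabilize $P(\max H_j, \cdot)$ on the later slots, obtaining a trailing color $c_j' \in \{0,1\}$ for each block; the largeness cost of each such stabilization is tracked by Lemma~\ref{lem:partition-large}.

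The role of transitivity is as follows. For blocks with matching internal and trailing color ($c_j = c_j' = c$, the \emph{good type}), the stabilized condition $P(\max H_j, y) = c$ for $y$ in a later $H_{j'}$ combines with the internal homogeneity of $H_j$ and $H_{j'}$, via transitivity of $P^{-1}(c)$, to give $P(x, y) = c$ for every $x \in H_j$, $y \in H_{j'}$ with $j < j'$. Concatenating the good-type blocks with the element $\min X$ (after an initial split of $X \setminus \{\min X\}$ by $P(\min X, \cdot)$ so that the surviving side has colour $c$ from $\min X$) yields an $\omega^n$-large colour-$c$ $P$-homogeneous subset of $X$. A pigeonhole argument on the pairs $(c_j, c_j') \in \{0,1\}^2$, which take at most four values, then supplies enough good-type blocks provided the initial number of slots was at least a constant multiple of $\min X$.

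The main obstacle will be handling the ``bad'' pigeonhole case in which the dominant type is $(c, c')$ with $c \ne c'$: here one can observe that the top elements $\max H_j$ themselves form a colour-$c'$ pre-chain by the stabilization, and transitivity of $P^{-1}(c')$ makes this into a colour-$c'$ homogeneous set, but showing that this set inherits enough $\omega^n$-largeness from the structure of the slots is delicate and may require a separate recursive call on the bad case with swapped target colours. A second, essentially bookkeeping, obstacle is to verify that the $\omega^{4n+4}$ budget exactly absorbs the costs of (i) the initial colour split at $\min X$ via Lemma~\ref{lem:partition-large}, (ii) the $\approx \min X$ inductive applications on $\omega^{4n}$-large slots, (iii) the iterated stabilizations from Lemma~\ref{lem1:stabilize-colouring}, and (iv) the exp-sparse reduction of Lemma~\ref{lem:alpha-sparse}.
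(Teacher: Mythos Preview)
Your approach differs substantially from the paper's, and the ``bad'' case you flag is a real gap, not merely a delicate point. When the dominant type is $(c,c')$ with $c \ne c'$, the set $\{\max H_j\}$ is indeed colour-$c'$ homogeneous, but it has only about $\min X$ elements and is therefore merely $\omega$-large, not $\omega^n$-large. There is no evident parameter on which a ``recursive call with swapped colours'' could recurse: the blocks $H_j$ are colour-$c$ internally, the cross-structure is colour-$c'$, and nothing in the construction produces a smaller instance of the $\ADS$ problem with the roles of the colours exchanged. The bookkeeping is also genuinely tight rather than routine: with only four spare exponent levels over the inductive target $\omega^{4(n-1)+4}=\omega^{4n}$, you must absorb the exp-sparse reduction (three levels via Lemma~\ref{lem:alpha-sparse}), an initial colour split, a sequence of stabilizations (each costing a level on the remaining slots by Lemma~\ref{lem1:stabilize-colouring}), and a factor-four pigeonhole loss --- this does not fit.

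The paper sidesteps all of this with a single recolouring. It defines $Q\colon[X]^2\to 4n$ where $Q(x,y)$ records the largest $k<n$ and the colour $i$ such that $[x,y]\cap X$ contains an $(\omega^k{+}1)$-large (respectively, only $\omega^k$-large) colour-$i$ homogeneous set through $x$ and $y$; the four residues modulo $4$ encode the colour bit and the ``${+}1$'' bit. Theorem~\ref{KS-theorem} applied with $4n$ colours to the $\omega^{4n+4}$-large set $X$ yields an $\omega$-large $Q$-homogeneous set $\bar H=\{x_0<\dots<x_m\}$ with $m\ge x_0$. The contradiction is then immediate: if, say, each $[x_i,x_{i+1}]$ is $(0,\omega^k{+}1)$-long, the witnessing homogeneous sets overlap at the endpoints $x_i$, so by transitivity of $P$ their union is colour-$0$ homogeneous and, since $m\ge x_0$, it is $\omega^{k+1}$-large; hence $[x_0,x_m]$ is $(0,\omega^{k+1})$-long and $Q(x_0,x_m)\ne Q(x_0,x_1)$. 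There is no induction on $n$, no exp-sparseness, and no stabilization --- the exponent $4n+4$ is exactly the Ketonen--Solovay cost for $4n$ colours.
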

Theorem \ref{thm:ADS-largeness} is a reformulation of \cite[Lemma 4.4]{PY}. The proof below is still based on the idea of the original proof.
\begin{proof}
Let $X\subseteq_{\fin}\N$ be an $\ome^{4n+4}$-large set with $\min X\ge 3$. Assume towards a contradiction that $X$ is not $\ADS$-$\omega^{n}$-large.
Thus, there is a transitive colouring $P:[X]^{2}\to 2$ without an $\ome^{n}$-large homogeneous set.
Given $x,y\in X$ with $x<y$, we say that an interval $[x,y]$ is $(i,\alpha)$-long if $P(x,y)=i$ and there exists an $\alpha$-large set $H\subseteq[x,y]\cap X$ such that $x,y\in H$ and $H$ is $P$-homogeneous with colour $i$.
Define a new colouring $Q: [X]^{2}\to 4n$ as follows:
\begin{align*}
 Q(x,y)=
\begin{cases}
 4k & \mbox{if $[x,y]$ is $(0,\ome^{k})$-long but not $(0,\ome^{k}+1)$-long},\\
 4k+1 & \mbox{if $[x,y]$ is $(0,\ome^{k}+1)$-long but not $(0,\ome^{k+1})$-long},\\
 4k+2 & \mbox{if $[x,y]$ is $(1,\ome^{k})$-long but not $(1,\ome^{k}+1)$-long},\\
 4k+3 & \mbox{if $[x,y]$ is $(1,\ome^{k}+1)$-long but not $(1,\ome^{k+1})$-long},\\
\end{cases}
\end{align*}
where $0\le k<n$.
Since there is no $\ome^{n}$-large $P$-homogeneous set, $Q$ is well-defined.
By Theorem~\ref{KS-theorem}, there exists an $\ome$-large $Q$-homogeneous set $\bar H\subseteq X$.
Write $\bar H=\{x_{0},\dots,x_{m}\}$ where $x_{0}<\dots<x_{m}$. By $\omega$-largeness, $m\ge x_{0}$. 

We now claim that $Q(x_{0},x_{1}) \neq Q(x_{0},x_{m})$,
which will contradict the $Q$-homogeneity of $\bar H$.
The proof of the claim splits into four cases depending on $Q(x_{0},x_{1})$. Consider for instance the case where $[x_0, x_1]$, and thus each of
$[x_{i},x_{i+1}]$, is $(0,\ome^{k}+1)$-long but not $(0,\ome^{k+1})$-long. For each $i \le m-1$, let $H_i$ be the $\ome^{k}+1$-large 
$Q$-homogeneous subset of 
$[x_{i},x_{i+1}]$ whose existence follows from the assumption that $[x_{i},x_{i+1}]$ is $(0,\ome^{k}+1)$-long.
Let $H$ be $\bigcup_{i < m} H_i$. 
Note that $x_0 \in H_0$ and that $x_i = \max H_{i-1} = \min H_i$ for $1 \le i \le m-1$; in particular, $H_{i-1} \cap H_{i} \neq \emptyset$. Thus,
by the transitivity of $P$, the set $H$ is $Q$-homogeneous with colour $0$. 
Moreover, $m\ge x_{0}$ and $H=\{x_0\} \cup \bigcup_{i < m} (H_i\!\setminus\!\{x_i\})$ imply that $H$ is $\ome^{k+1}$-large. 
Hence, $[x_{0},x_{m}]$ is $(0,\ome^{k+1})$-long, which implies $Q(x_{0},x_{1}) \neq Q(x_{0},x_{m})$.
The other cases are similar or easier.
\end{proof}

\begin{proof}[\it Proof of Theorem~\ref{thm:RT22-largeness-main}.]
We show that if $X\subseteq_{\fin}\N$ is $(\ome^{(4n+4)\cdot 36+3}+1)$-large, then it is $\RT^{2}_{2}$-$\ome^{n}$-large.
Fix a colouring $P:[X]^{2}\to 2$.
First, using Lemma~\ref{lem:alpha-sparse}, take $X_{0}\subseteq X$ which is $\ome^{(4n+4)\cdot 36}$-large and $\exp$-sparse.
Next, using Theorem~\ref{thm:EM-largeness}, take $X_{1}\subseteq X_{0}$ such that $X_{1}$ is $\ome^{4n+4}$-large and $P$ is transitive on $[X]^{2}$.
Finally, Theorem~\ref{thm:ADS-largeness} gives $Y\subseteq X_{1}$ which is  $\ome^{n}$-large and $P$-homogeneous.
\end{proof}

\begin{remark}\label{rem:better-bound}
One may obtain slightly better bounds for some of the theorems/lemmas above.
For example, in Lemma \ref{lem2:ome-n-c-grouping}, if $d=2$ then we only need $X$ to be $\ome^{n+2}$-large, because we only need to shrink $X_1$ in the first stage of the proof and $Y_{0} = X_0$ in the second stage.  
This could actually  be used to obtain a slightly better upper bound ($\ome^{n+5k}$-largeness) in Theorem~\ref{thm:grouping-main} but such small improvements are not particularly important from our perspective.

On the other hand, the bound in Theorem~\ref{thm:grouping-main} cannot be reduced to $\ome^{n+o(n)}$-largeness.
Indeed, Kotlarski~et~al.~\cite[Theorem~5.4]{KPW} showed that if a set $X$ is $\RT^{2}_{2}$-$\ome^{n}$-large, then it is $\ome^{2n}$-large.
\end{remark}

\section{Finite consequences of Ramsey's theorem for pairs}\label{sec:formalization}
In this section, we explain the relevance of Theorem \ref{thm:RT22-largeness-main} to logic, or more specifically to proof theory. Ramsey-theoretic principles are well-known to display interesting behaviour with respect to provability in axiomatic theories. For instance, the already mentioned Paris-Harrington principle, 
which states: 
\begin{center}
``for every $n, \ell \in \N$ there exists a finite set $X\subseteq_{\fin}\N$ such that $\min X \ge \ell$ and $X$ is $\RT^{n}_{2}$-$\omega$-large''
\end{center}
is unprovable in Peano Arithmetic (see e.g.~\cite{Ha-Pu}). In contrast, it was recently proved in \cite{PY} that (infinite) Ramsey's theorem for pairs and two colours is in a certain sense proof-theoretically ``tame''. Theorem \ref{thm:RT22-largeness-main} makes it possible to give a more direct proof of that result and, in fact, to strengthen it. 

To understand the proofs in this section, the reader will need some familiarity with axiomatic theories of first- and second-order arithmetic and their models---see \cite{SOSOA, Ha-Pu} for details. The following very brief review will hopefully suffice for understanding the statements of the results. The \emph{language of second-order arithmetic} has two types of variables: \emph{first-order} variables $x,y,z,\ldots$ or 
$k,\ell, n,\ldots$ to stand for natural numbers (which can also be used to code other finite objects, such as finite subsets of $\N$) and \emph{second-order} variables to stand for subsets of $\N$ (which can also be used to code relations on $\N$). A formula in this language is $\Sigma^0_n$ if it has no second order quantifiers and consists of at most $n$ first-order quantifiers (beginning with $\exists$) followed by a formula in which all quantifiers have to be \emph{bounded}, i.e.~of the form $\exists x\!<\!y$ or $\forall x\!<\!y$. The dual class of formulas beginning with $\forall$ is called $\Pi^0_n$, while $\forall\Sigma^0_n$ stands for the class of formulas consisting of universal (possibly second-order) quantifiers followed by a $\Sigma^0_n$ formula. $\RCAo$ is an axiomatic theory in this language which has: (a) some basic axioms specifying that $\N$ is a discrete ordered semiring, (b) the \emph{$\Delta^0_1$-comprehension} axiom, which states that for every decidable property $R$ of natural numbers (as given by an appropriate syntax) the set $\{n\in \N: R(n)\}$ exists, and (c) the $\Sigma^0_1$-induction axiom, which allows the use of mathematical induction for any property expressed by a $\Sigma^0_1$ formula (which in fact means: for any recursively enumerable property). $\RCAo$ may be viewed as embodying the methods of ``computable mathematics''. For each $n$, any $\forall\Sigma^0_n$ statement provable in $\RCAo$ is provable in the weaker theory $\II$, which only has axioms of type (a), (c). $\EFA$ (\emph{Elementary Function Arithmetic}) is an even weaker theory in which mathematical induction can only be used for properties defined without using any unbounded quantifiers; to counteract this weakness, $\EFA$ has to include an additional axiom guaranteeing the basic properties of the exponential function on $\N$, including the fact that $2^n$ exists for every $n \in \N$.

The main result of \cite{PY} concerns the theory $\WKLo + \RT^2_2$, which is obtained by adding a weak version of K\"onig's Lemma and a natural statement of Ramsey's theorem for pairs and two colours to $\RCAo$.

\begin{theorem*}\cite[Theorem 7.4]{PY}\label{thm:py}
$\WKLo+\RT^{2}_{2}$ is $\forall\Sigma^0_2$-conservative over $\RCAo$. That is, each $\forall\Sigma^0_2$ statement provable in $\WKLo + \RT^2_2$ is already provable in $\RCAo$.
\end{theorem*}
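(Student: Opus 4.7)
The plan is to prove this conservation result model-theoretically, using Theorem~\ref{thm:RT22-largeness-main} in place of the forcing construction of \cite{PY}. Suppose, toward a contradiction, that a $\forall\Sigma^0_2$-sentence $\forall X\,\forall n\,\exists m\,\theta(X,n,m)$ is provable in $\WKLo+\RT^2_2$ but not in $\RCAo$. By completeness there is a countable $(M,\mathcal{S})\models \RCAo$ together with witnesses $X_0\in\mathcal{S}$ and $n_0\in M$ such that $(M,\mathcal{S})\models \forall m\,\neg\theta(X_0,n_0,m)$. Such an $M$ must be non-standard, since every true $\forall\Sigma^0_2$-sentence holds in $\N$. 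I would then expand $\mathcal{S}$ to some $\mathcal{S}^{\ast}\supseteq \mathcal{S}$, keeping the first-order part $M$ fixed, so that $(M,\mathcal{S}^{\ast})\models \WKLo+\RT^2_2$. Because the counterexample $\forall m\,\neg\theta(X_0,n_0,m)$ only quantifies over $M$, the failure of the sentence transfers automatically to $(M,\mathcal{S}^{\ast})$, contradicting the provability assumption.

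The extension $\mathcal{S}^{\ast}$ is constructed by an $\omega$-length iteration: enumerate all trees and all $2$-colourings of pairs that appear in the successive $\mathcal{S}_k$, and at each stage add either a branch through the given tree (for $\WKL$, handled by the low basis theorem) or an infinite $P$-homogeneous set (for $\RT^2_2$). Producing the homogeneous set is where Theorem~\ref{thm:RT22-largeness-main} does its work: fix a non-standard $e\in M$; inside $M$, use Lemma~\ref{lem:alpha-sparse} and the non-standardness of $M$ to exhibit an $\omega^{300e}$-large subset of $M$, and then apply the formalised version of Theorem~\ref{thm:RT22-largeness-main} internally to obtain an $\omega^{e}$-large $P$-homogeneous $H\in M$. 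Since $e$ is non-standard, $H$ has more than any standard-finite number of elements, and externally it is an infinite $P$-homogeneous set.

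Two technical ingredients require care. First, the combinatorial argument of Section~\ref{sec:calculation} must be formalised in a sufficiently weak base theory---at least $\RCAo$, and plausibly in a fragment as weak as $\Bexp$. The authors' deliberate avoidance of transfinite induction (notably in Lemma~\ref{lem:partition-large}) and their use of only elementary finite-combinatorial tools strongly suggest that Theorems~\ref{thm:grouping-main}, \ref{thm:EM-largeness} and \ref{thm:ADS-largeness} all formalise in such a theory, so that they are available inside $M$ with non-standard parameters. Second, the iteration must preserve $\RCAo$---in particular $\Sigma^0_1$-induction and $\Delta^0_1$-comprehension relative to the growing $\mathcal{S}_k$---which requires verifying that each newly added homogeneous set and tree path is ``tame'' enough not to destroy those principles.

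The main obstacle I expect is this second point: preservation of induction and collection through the iteration. In \cite{PY} this was handled by indicator arguments and an infinitary forcing construction; the expected gain here is that the explicit, primitive-recursive bound of Theorem~\ref{thm:RT22-largeness-main} yields direct uniform control over $H$. Because $H$ is produced by an internal construction in $M$ parameterised by a single non-standard $e$, rather than by a generic forcing condition, its complexity relative to $\mathcal{S}_k$ can be bounded explicitly, and the verification that $(M,\mathcal{S}_{k+1})\models \RCAo$ reduces to a finite combinatorial check rather than a second-order infinitary argument. Carrying out this verification---alongside a careful formalisation of Theorem~\ref{thm:RT22-largeness-main} in the intended base theory, and checking that the interplay between new branches (from $\WKL$) and new homogeneous sets (from $\RT^2_2$) does not introduce hidden failures of induction---is where the remaining work, and the ``more direct'' nature of the present proof relative to \cite{PY}, will become manifest.
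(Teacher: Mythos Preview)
Your proposal has a genuine gap at the core step. You plan to keep the first-order part $M$ fixed and to witness $\RT^2_2$ by applying Theorem~\ref{thm:RT22-largeness-main} internally to an $\omega^{300e}$-large $M$-finite set, obtaining an $\omega^e$-large $P$-homogeneous $H$. But the output of that theorem is itself an $M$-finite set: it has nonstandardly many elements, yet it also has a maximum element in $M$ and is therefore \emph{bounded}. In the structure $(M,\mathcal{S}^\ast)$, ``infinite'' means unbounded in $M$, so an $M$-finite homogeneous set---however large---does not witness $\RT^2_2$. Your claim that ``externally it is an infinite $P$-homogeneous set'' conflates ``has nonstandard cardinality'' with ``is unbounded in the model''; only the latter is what $\RT^2_2$ requires. (A secondary issue is that the existence of an $\omega^{300e}$-large set for nonstandard $e$ is not automatic in $\RCAo$, though this can be arranged by overspill.)

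This is exactly why the paper does \emph{not} keep $M$ fixed. Instead it builds a decreasing $\omega$-chain $X=X_0\supseteq X_1\supseteq\cdots$ of $M$-finite sets, using Corollary~\ref{thm:RT22-largeness-main-formalized} at odd stages to homogenise the enumerated colourings and at even stages to avoid the enumerated non-$\omega^c$-large sets; it then passes to the cut $I=\sup_i\min X_i$ and the coded second-order part $\Cod(M/I)$. Inside $I$, each $X_{2i+1}\cap I$ is unbounded---genuinely infinite in $(I,\Cod(M/I))$---so the $M$-finite homogeneous sets now witness $\RT^2_2$. The even stages make $I$ semi-regular, which yields $\WKLo$ directly without any low-basis argument, and the $\Pi^0_1$ counterexample is transferred to $I$ by arranging that a suitable Skolem sequence remains cofinal in $I$. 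Your outline misses this shrinking-to-a-cut manoeuvre, which is precisely the device that converts the finite combinatorics of Theorem~\ref{thm:RT22-largeness-main} into the infinite combinatorics of $\RT^2_2$.
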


The combinatorial core of the proof of this theorem in \cite{PY} is contained in the following result about $\alpha$-largeness. Here and below,
ordinals smaller than $\omega^\omega$ are represented in $\RCAo$ by letting the number coding $\langle n_0, \ldots, n_{k-1} \rangle$ stand for $\sum_{i<k}\ome^{n_{i}}$.

\begin{proposition*}\cite[Proposition 7.7]{PY}
For every natural number $n$ there exists a natural number $m$ such that $\RCAo$ proves: for every $X\subseteq_{\fin}\N$ with $\min X \ge 3$, if $X$ is $\omega^m$-large, then $X$ is $\RT^{2}_{2}$-$\omega^n$-large.
\end{proposition*}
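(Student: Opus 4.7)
The plan is to formalize the proof of Theorem~\ref{thm:RT22-largeness-main} within $\RCAo$, setting $m = 300n$ for each metamathematically fixed $n$. Since the proposition's quantifiers over $n$ and $m$ range externally, it suffices to supply, for each fixed $n$, a separate $\RCAo$-proof of the corresponding instance. In particular, the various inductions on exponents carried out in Section~\ref{sec:calculation} need not be formalized uniformly in $n$: each becomes a finite proof scheme with one instance per external value of the exponent.

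For each fixed $n$, the formalization is routine. The operations on ordinals below $\omega^\omega$ in Cantor normal form---the $\unrhd$-sum, the fundamental sequence operation $[m]$, and $\MC$---are primitive recursive in the codes, so the predicates ``$X$ is $\alpha$-large'', ``$X$ is $\exp$-sparse'', and (for fixed $n$) ``$X$ is $\RT^2_2$-$\omega^n$-large'' are all expressible by bounded formulas, and the whole statement to be proved is $\Pi^0_1$. The auxiliary results of Section~\ref{sec:calculation} either proceed by induction on an external parameter---as with Lemmas~\ref{lem:alpha-sparse}, \ref{lem:partition-large}, Theorem~\ref{thm:grouping-main}, and Theorem~\ref{thm:EM-largeness}---or via finite combinatorial constructions on primitive recursive data, and each is therefore routinely formalized once its external parameters are fixed. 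The decomposition $\RT^{2}_{2} = \EM + \ADS$ used in the proof of Theorem~\ref{thm:RT22-largeness-main} just combines these formalized ingredients.

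The one place with a substantive \emph{internal} induction is the Ketonen--Solovay proof of Theorem~\ref{KS-theorem}: the inequality ``$\gamma_i \le \gamma_0[x_1]\ldots[x_i]$'' is established by induction on $i < \ell$, where $\ell = |X|$. Because the h.m.p.h.\ sequences $\sigma_x$, the ordinals $\gamma_i^d$ and $\gamma_i$, and the $d$-critical predicate are all primitive recursive in $P$ and the enumeration of $X$, the induction formula is bounded and so falls well within the $\Sigma^0_1$-induction available in $\RCAo$. The main obstacle is therefore purely one of bookkeeping---verifying at each step of Section~\ref{sec:calculation} that no unbounded quantifier slips in and that every implicit induction formula has the expected low complexity. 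Once this is checked, the proof in fact formalizes in considerably weaker fragments than $\RCAo$, a point we shall return to below.
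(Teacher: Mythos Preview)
Your proposal is correct and follows essentially the same route as the paper: take $m=300n$ and observe that the combinatorial arguments of Section~\ref{sec:calculation} formalize in $\RCAo$. The one substantive difference is that you fix $n$ externally and treat the inductions on exponents (in Lemmas~\ref{lem:alpha-sparse} and~\ref{lem:partition-large}, Theorem~\ref{thm:grouping-main}, Theorem~\ref{thm:EM-largeness}) as external finite schemes, which is exactly what the proposition requires. The paper goes further and proves the \emph{uniform} statement with $n$ quantified inside the theory (Corollary~\ref{thm:RT22-largeness-main-formalized}), even in $\EFA$. The device that makes this possible---which you allude to but do not spell out---is that the apparent $\Pi^0_1$-inductions over ``all $X\subseteq_{\fin}\N$'' can be replaced by bounded inductions once one restricts attention to subsets of the given $X$; hence no genuine $\Sigma^0_1$- or $\Pi^0_1$-induction is needed, and the argument runs uniformly in $n$. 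Your version suffices for the proposition as stated; the paper's version buys the internal quantifier over $n$ and the drop to $\EFA$.
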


However, the proof of \cite[Theorem 7.4]{PY}\label{thm:py} does not work with \cite[Proposition 7.7]{PY} directly, but instead makes use of an intermediate notion of ``density''. Moreover, even though \cite[Proposition 7.7]{PY} is a statement of finite combinatorics, its proof involves a major detour through an infinitary principle (cf.~\cite[Section 6]{PY}). Our proof of Theorem \ref{thm:RT22-largeness-main} is considerably more direct and it is readily seen to give the following stronger version of \cite[Proposition 7.7]{PY}:
\begin{corollary}\label{thm:RT22-largeness-main-formalized}
$\RCAo$ (and, in fact, the weaker theory $\EFA$) proves the following: for every $n \in \N$ and every $X\subseteq_{\fin}\N$ with $\min X\ge 3$, if $X$ is 
$\omega^{300n}$-large, then $X$ is $\RT^{2}_{2}$-$\omega^{n}$-large.
\end{corollary}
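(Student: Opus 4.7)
The plan is to inspect the proof of Theorem~\ref{thm:RT22-largeness-main} given in Section~\ref{sec:calculation} and verify that every step can be formalized in $\EFA$. Two kinds of checks are needed: (i) that the basic notions---$\alpha$-largeness, $\exp$-sparseness, $\RT^{2}_{2}$-$\omega^{n}$-largeness, and operations on Cantor normal forms below $\omega^{\omega}$---are $\Delta_{0}$-definable using the exponential function; and (ii) that the various inductions appearing in Section~\ref{sec:calculation} can be carried out with only the induction principles available in $\EFA$. There is no need to reprove anything combinatorial; the task is purely one of checking formalization.

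For (i), every operation on Cantor normal forms (addition, $\alpha\mapsto\alpha[m]$, $\MC$, comparison) is polynomial-time computable, and applying $\alpha[m]$ preserves the length of the normal form up to an elementary bound. The largeness predicate ``$X$ is $\alpha$-large'' is then computed by iterating $\alpha[m]$ along the at most $|X|$ elements of $X$, which is a bounded elementary computation; similarly for exp-sparseness and for the ``$\RT^{2}_{2}$-$\omega^{n}$-large'' predicate, since the latter quantifies over colourings and subsets of $X$ both bounded by $|X|$. Hence all the relevant notions are $\Delta_{0}$-definable with exp, and $\EFA$ can reason about them. For (ii), the key observation is that every construction in Section~\ref{sec:calculation}---the $\omega^{m}$-sparse subset in Lemma~\ref{lem:alpha-sparse}, the ``majority'' set in Lemma~\ref{lem:partition-large}, the groupings in Lemmas~\ref{lem2:ome-n-c-grouping}--\ref{lem3:ome-n-ome-grouping} and Theorem~\ref{thm:grouping-main}, and the transitive/homogeneous subsets in Theorems~\ref{thm:EM-largeness}--\ref{thm:ADS-largeness}---is \emph{effective}: each proof implicitly defines an elementary-computable procedure producing the required output. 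The recursion on the index $n$ or $k$ has depth bounded by that index, which is itself bounded by an elementary function of $|X|$ given the largeness hypothesis on $X$. Each induction can therefore be reformulated as a $\Delta_{0}$-induction with the index bounded by $|X|$, which is available in $\EFA$.

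The main care point will be formalizing the Ketonen--Solovay argument (Theorem~\ref{KS-theorem}) in Subsection~\ref{subsec:ket-sol}, which uses the potential function $\gamma_{i}=\gamma_{i}^{1}+\dots+\gamma_{i}^{n}$ built from h.m.p.h.~sequences, critical indices, and counting parameters $m_{i,1}^{d}, m_{i,2}^{d}$. All these objects are elementarily computable from the colouring $P$ and the enumeration of $X$, and the key claim $\gamma_{i+1}\le\gamma_{i}[x_{i+1}]$ is a $\Delta_{0}$ statement whose proof is a case analysis based on the exp-sparseness inequality $x_{j_{0}}>2n^{nx_{j_{0}-1}}$---an elementary arithmetic fact available in $\EFA$. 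The outer argument reaching a contradiction proceeds by a bounded induction on $i<\ell$ (with $\ell\le|X|$) to establish $\gamma_{i}\le\gamma_{0}[x_{1}]\dots[x_{i}]$, again a $\Delta_{0}$-induction. Once the Ketonen--Solovay theorem is formalized, the remaining pieces---Theorems~\ref{thm:EM-largeness} and~\ref{thm:ADS-largeness}, and the concluding composition in the proof of Theorem~\ref{thm:RT22-largeness-main}---translate mechanically, and the corollary follows.
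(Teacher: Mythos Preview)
Your proposal is correct and follows essentially the same approach as the paper: an inspection of the proofs in Sections~\ref{sec:general} and~\ref{sec:calculation} to verify that all definitions are $\Delta_0(\exp)$ and all inductions are bounded, hence available in $\EFA$. The paper highlights slightly more explicitly the one point you leave implicit---that the apparently $\Pi^0_1$ inductions (e.g.~in Theorem~\ref{thm:grouping-main}) become bounded because the universal quantifier over finite sets can be restricted to subsets of the given $X$---but your remark that the constructions are effective and stay within subsets of $X$ amounts to the same observation.
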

\begin{proof}
An inspection of the arguments in Sections \ref{sec:general} and \ref{sec:calculation} (including the proof of Theorem~\ref{KS-theorem} as presented in Subsection \ref{subsec:ket-sol}) reveals that they only make use of elementary manipulations of finite combinatorial objects such as finite sets, finite trees and Cantor Normal Forms, and of the usual principle of mathematical induction applied to properties that can be expressed using bounded quantifiers, possibly with exponentially large bounds. These tools are available within $\EFA$. (A different proof of Theorem~\ref{KS-theorem} in $\EFA$ was recently given by Pelupessy \cite{Pelupessy-unpublished}.)

Crucially, none of the arguments involve transfinite induction up to $\omega^\omega$ (which is not available in $\RCAo$) or mathematical induction for 
$\Sigma^0_1$ or $\Pi^0_1$ properties whose definitions require unbounded quantifiers (this would be available in $\RCAo$ but not in $\EFA$). Regarding the second point, note that all apparent uses of $\Pi^0_1$-induction---as in, for instance, the proof of Theorem \ref{thm:grouping-main}, where we seem to be using induction for a statement quantifying over all $X\subseteq_{\fin}\N$---can be replaced by bounded induction: for any given $X$, the universal quantifier in the induction property can be restricted to range over subsets of $X$.  
\end{proof}

The extra strength provided by Corollary \ref{thm:RT22-largeness-main-formalized} can be used to obtain
a strengthening of \cite[Theorem 7.4]{PY}, by means of a relatively simple proof that avoids the concept of
density. To express the strengthening, let $\WO(\alpha)$, for $\alpha < \omega^\omega$, 
denote the statement that there is no infinite descending sequence of ordinals starting from $\alpha$. 
The following lemma lists some basic properties of ordinals below $\omega^\omega$ provable within $\RCAo$. 
The properties are well-known and their easy proofs seem to be part of the folklore.

\begin{lemma}\label{lem:ordinals-in-rca}
The following are provable within $\RCAo$.
\begin{enumerate}
 \item For any $\alpha<\omega^{\omega}$, $\WO(\alpha)$ if and only every set of ordinals smaller than $\alpha$ has a minimum element.
 \item For any $\alpha<\omega^{\omega}$, $\WO(\alpha)$ if and only if any infinite set contains an $\alpha$-large subset.
 \item For any $m\in\N$, $\WO(\omega^{m})$ implies $\WO(\omega^{2m})$.
\end{enumerate}
\end{lemma}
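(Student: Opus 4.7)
The plan is to handle the three items in order, using only elementary manipulations of Cantor normal forms together with $\Sigma^0_1$-induction and its standard consequence, bounded $\Sigma^0_1$-comprehension. For item (1), I argue each direction contrapositively. Given a nonempty set $S$ of ordinals below $\alpha$ without a minimum element, I build a descending sequence by primitive recursion with $S$ as a parameter: start from any element of $S$, and at each step pick some element of $S$ strictly below the current one, which is guaranteed to exist by the absence of a minimum. Conversely, given a descending function $f:\N\to\N$ starting below $\alpha$, its values are all ordinals $\le f(0)$; using the fact that under the chosen coding of Cantor normal forms such ordinals have bounded codes in $\N$, the range of $f$ is a bounded $\Sigma^0_1$-definable set and therefore exists by bounded $\Sigma^0_1$-comprehension. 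This nonempty set has no minimum, contradicting the hypothesis.

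For item (2), the forward direction is immediate: given an infinite $X=\{x_0<x_1<\dots\}$, I compute $\alpha_0=\alpha$ and $\alpha_{i+1}=\alpha_i[x_i]$; either some $\alpha_i=0$ and $\{x_0,\dots,x_{i-1}\}$ is the desired $\alpha$-large initial segment, or else $(\alpha_i)$ is itself an infinite descending sequence from $\alpha$, contradicting $\WO(\alpha)$. For the backward direction, given a descending sequence $f$ below $\alpha$, I build by primitive recursion an increasing sequence $x_0<x_1<\dots$ maintaining the invariant $\alpha[x_0]\dots[x_i]\ge f(i+1)>0$. The key arithmetic observation is that for any $\beta=\beta'+\omega^n>0$ and any target $\gamma<\beta$, a sufficiently large $x$ yields $\beta[x]\ge\gamma$ (when $n\ge 1$ because $\omega^{n-1}\cdot x$ grows unboundedly with $x$; when $n=0$ automatically since $\beta[x]=\beta'\ge\gamma$). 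The resulting infinite set $X$ has no $\alpha$-large initial segment, and by the upward-preservation of $\alpha$-largeness from Lemma~\ref{lem:alpha-large-subset}, no $\alpha$-large subset of $X$ at all, contradicting the hypothesis.

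For item (3), I use the ordinal decomposition $\omega^{2m}=\omega^m\cdot\omega^m$: every $\delta<\omega^{2m}$ has a unique representation $\delta=\omega^m\cdot\beta+\gamma$ with $\beta,\gamma<\omega^m$. Suppose for contradiction that $f$ is a descending sequence from $\omega^{2m}$, and decompose $f(n)=\omega^m\cdot\beta_n+\gamma_n$. The strict decrease of $f$ forces $(\beta_n)$ to be weakly decreasing. Applying classical excluded middle to the arithmetical statement ``$(\beta_n)$ is eventually constant'' splits the argument into two cases: if the sequence stabilizes from some index $N$ onwards, then $(\gamma_{N+k})_k$ is strictly decreasing below $\omega^m$; otherwise, primitive recursion extracts a strictly decreasing subsequence of $(\beta_n)$ itself. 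Either way, the result is a descending sequence from $\omega^m$, contradicting $\WO(\omega^m)$.

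The main obstacle will be to ensure that each step stays within the formal strength of $\RCAo$---in particular, verifying that the primitive recursions only require $\Sigma^0_1$-induction with set or function parameters, that the bounded-range argument in item (1) is justified by bounded $\Sigma^0_1$-comprehension rather than by full arithmetical comprehension, and that the case split in item (3) is read as an internal classical dichotomy on an arithmetical formula rather than a semantic trichotomy in the metatheory. The actual arithmetic on Cantor normal forms is elementary throughout.
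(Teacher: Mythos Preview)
The paper does not give a proof of this lemma, simply calling it folklore, so there is nothing to compare against directly. Your arguments for items 2 and 3, and for the forward direction of item 1, are correct and stay within $\RCAo$.

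The backward direction of item 1 contains a genuine error. You assert that ordinals $\le f(0)$ have bounded codes, so that the range of $f$ exists by bounded $\Sigma^0_1$-comprehension. This is false under the paper's coding (and under any reasonable one): already if $f(0)\ge\omega$, every finite ordinal $k$ lies below $f(0)$, and its code---the sequence $\langle 0,\dots,0\rangle$ of length $k$---can be arbitrarily large. More to the point, since a strictly ordinal-descending $f$ is injective as a map $\N\to\N$, its range is unbounded in $\N$ by pigeonhole, and in an arbitrary model of $\RCAo$ it need not exist as a set. The fix is to thin $f$ out first. By injectivity, for each $k$ the set $\{m:f(m)\le k\}$ has at most $k{+}1$ elements, hence cofinitely many $m$ satisfy $f(m)>k$. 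Define $m_0=0$ and $m_{k+1}=\mu m>m_k\,[f(m)>k]$ by primitive recursion, and set $g(k)=f(m_k)$. Then $g(k)\ge k$ for all $k$, so $\rng(g)=\{y:\exists k\le y\,(g(k)=y)\}$ exists by $\Delta^0_1$-comprehension; it is a nonempty set of ordinals below $\alpha$ without a minimum, since $g$ inherits strict ordinal-descent from $f$.
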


\noindent In contrast, $\RCAo$ is unable to prove ``$\WO(\alpha)$ holds for every $\alpha < \omega^\omega$''.

\begin{theorem}\label{thm:py-strong}
$\WKLo+\RT^{2}_{2}$ is conservative over $\RCAo$ with respect to sentences of the form:
\[\A \alpha\!<\!\omega^{\omega}\,(\WO(\alpha)\to \varphi(\alpha))\]
where $\varphi$ is $\forall\Sigma^0_2$.
\end{theorem}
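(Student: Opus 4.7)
Plan: We argue by contraposition, following the model-theoretic conservation strategy of \cite[Theorem~7.4]{PY} but replacing the density machinery used there with a direct application of Corollary~\ref{thm:RT22-largeness-main-formalized} and tracking $\WO(\alpha)$ throughout.

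Suppose, for some $\forall\Sigma^0_2$ formula $\varphi$, that $\RCAo$ does not prove $\forall\alpha<\omega^\omega(\WO(\alpha)\to\varphi(\alpha))$. Take a countable model $M \models \RCAo$ together with some $\alpha \in M$ satisfying $M \models \alpha<\omega^\omega$, $M \models \WO(\alpha)$, and $M \models \neg\varphi(\alpha)$; fixing Skolem witnesses in $M$ for the leading existential quantifiers of $\neg\varphi(\alpha)$ reduces the latter to a $\Pi^0_2$ sentence $\psi$ with parameters from $M$. The goal is to build an extension $M^* \supseteq M$ with the same first-order part as $M$, satisfying $\WKLo + \RT^2_2$, and still satisfying $\WO(\alpha)$. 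Since $\psi$ refers only to the shared first-order part and to fixed parameters, it is automatically preserved in $M^*$, so $M^* \models \neg\varphi(\alpha)$, contradicting $\WKLo + \RT^2_2 \vdash \forall\alpha(\WO(\alpha)\to\varphi(\alpha))$.

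I would construct $M^*$ as the union of a chain $M = M_0 \subseteq M_1 \subseteq \dots$, at each stage adding either a path through an infinite binary $M_i$-tree (handling $\WKLo$) or an infinite homogeneous set for an $M_i$-colouring of pairs (handling $\RT^2_2$), subject to the invariant that $\WO(\alpha)$ continues to hold. The $\WKLo$-steps are standard: a path can be chosen via a low-basis-style argument so that no new infinite $\alpha$-descending sequence is introduced. For an $\RT^2_2$-step against a colouring $c \in M_i$, I would invoke Lemma~\ref{lem:ordinals-in-rca}.2 together with $\WO(\alpha)$: inside $M_i$, every infinite set contains an $\alpha$-large subset, and by Corollary~\ref{thm:RT22-largeness-main-formalized} every such subset contains an $\omega^n$-large $c$-homogeneous subset for each $n$ with $\omega^{300n}\le\alpha$. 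This internal abundance of arbitrarily large finite approximations then feeds into a Mathias-style forcing with conditions $(F,X)$, where $F$ is a finite $c$-homogeneous set and $X\in M_i$ is an infinite reservoir, producing an infinite $c$-homogeneous $H$ with $M_i[H] \models \WO(\alpha)$.

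The crux is precisely this last point: arranging at each $\RT^2_2$-step that the $H$ produced by the forcing, together with the parameters already present in $M_i$, codes no new infinite $\alpha$-descending sequence. Here the quantitative and uniformly $\EFA$-formalisable character of Corollary~\ref{thm:RT22-largeness-main-formalized} is essential, as it supplies from within $M_i$ a dense family of approximating $\omega^n$-large $c$-homogeneous sets at every standard level, so that a suitable genericity condition---modelled on the parameterised grouping arguments of Section~\ref{subsec:grouping} but now indexed over prospective $\alpha$-descending sequences---actively blocks each such sequence from being computed from $H$. With $\WO(\alpha)$-preservation secured, the rest of the argument follows the outline of \cite[Theorem~7.4]{PY}, with the detour through the density concept eliminated.
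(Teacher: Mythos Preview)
The paper's proof takes an entirely different architecture. Rather than building an $\omega$-extension of $M$ via iterated forcing, it passes \emph{downward} to a cut: starting from a countable nonstandard $(M,S)\models\RCAo$ with witnesses $\alpha,A,a$ and choosing $c$ with $\alpha<\omega^c$ and $(M,S)\models\WO(\omega^c)$, one uses Lemma~\ref{lem:ordinals-in-rca}.3 and overspill to obtain an $M$-finite $X$ that is $\omega^{300^{d} c}$-large for some \emph{nonstandard} $d$. One then shrinks $X=X_0\supseteq X_1\supseteq\cdots$ through $\omega$ many stages, at odd stages applying Corollary~\ref{thm:RT22-largeness-main-formalized} to make the $i$-th colouring constant on $X_{2i+1}$, and at even stages using the same corollary to push the $i$-th non-$\omega^c$-large $M$-finite set out of $[\min X_{2i+2},\max X_{2i+2})$; each stage costs one factor of $300$ in the exponent. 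The cut $I=\sup_i\min X_i$ is then semi-regular, so $(I,\Cod(M/I))\models\WKLo$; the odd stages give $\RT^2_2$; the even stages give $\WO(\omega^c)$ directly via Lemma~\ref{lem:ordinals-in-rca}.2; and $\neg\varphi(\alpha)$ transfers because the $\Pi^0_2$ witness was encoded into the infinite set from which $X$ was drawn. No forcing notion and no separate preservation lemma are needed.

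Your approach has a genuine gap at precisely the point you flag as the crux. Corollary~\ref{thm:RT22-largeness-main-formalized} only delivers $\omega^n$-large \emph{finite} homogeneous sets for $n$ with $\omega^{300n}\le\alpha$, i.e.\ for $n$ bounded by a fixed element of $M$; you do not explain how these bounded-height finite objects assemble, via Mathias conditions $(F,X)$ with $F$ already homogeneous, into an \emph{infinite} homogeneous $H$---such conditions cannot in general be extended without first passing to a stable or cohesive reservoir, which you do not address. More seriously, the claim that ``a suitable genericity condition\ldots actively blocks each such sequence from being computed from $H$'' is exactly the $\WO(\alpha)$-preservation statement for $\RT^2_2$ that would have to be proved; the quantitative bound does not by itself yield it, and together with the $\II$-preservation also needed to keep the iteration inside $\RCAo$, this is tantamount to a strong $\Pi^1_1$-type preservation result for $\RT^2_2$ that goes well beyond what the theorem asserts. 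The paper's cut construction sidesteps all of this: the nonstandard tower $300^{d}$ in the exponent is precisely what buys $\omega$ many applications of the bound while retaining enough largeness, and the new model has a \emph{smaller} first-order part rather than a larger second-order one.
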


Note that the class of sentences considered in Theorem \ref{thm:py-strong} is strictly larger than the one in \cite[Theorem 7.4]{PY}\label{thm:py} because
$\WO(\alpha)$ is not a $\Sigma^0_3$ statement (it is in fact $\forall \Sigma^0_2$).

\begin{proof}
(In this argument, we follow the notational conventions of \cite{PY}, using the symbol $\bbomega$ to denote the smallest infinite ordinal \emph{as formalized in $\RCAo$} and reserving $\omega$ for the set of actual (standard) natural numbers.)
 
Let $\varphi(\alpha)\equiv \A X\, \A x\, \E y\, \A z\,\varphi_{0}(X[z], x,y,z,\alpha)$, where $\varphi_{0}$ is $\Sigma^{0}_{0}$, be a $\forall \Sigma^{0}_{2}$-formula such that $\RCAo$ does not prove $\A \alpha\!<\! \bbomega^{\bbomega}\,(\WO(\alpha)\to \varphi(\alpha))$.
Take a countable nonstandard model $(M,S)\models\RCAo+\exists \alpha\!<\!\bbomega^{\bbomega}\,(\WO(\alpha) \land \neg \varphi(\alpha))$.
There exist $A\in S$ and $a,\alpha\in M$ such that \[(M,S)\models\alpha<\bbomega^{\bbomega}\wedge\WO(\alpha)\wedge\A y \, \E z\, \neg\varphi_{0}(A[z],a,y,z,\alpha).\]
Take some $c\in M\setminus\{0\}$ such that $\alpha<\bbomega^{c}$ and $\WO(\bbomega^{c})$ holds in $(M,S)$.
(If $\alpha=\bbomega^{c_{0}}+\beta$, then Lemma~\ref{lem:ordinals-in-rca} part~3.~lets us take $c:=c_{0}+1$.)
Also take some $b\in M$ which is greater than each of $a$, $c$, and the code for $\alpha$.
Use primitive recursion in $(M,S)$ to define a sequence $\langle x_{i}: i\in M \rangle$ such that $x_{0}=b$ and $x_{i+1}=\min\{x>x_{i}: \A y\!<\!x_{i}\,\E z\!<\!x\,\neg\varphi_{0}(A[z],a,y,z,\alpha)\}$.
By $\Delta^0_1$-comprehension in $(M,S)$, the set $Y=\{x_{i}: i\in M\}$ belongs to $S$. Moreover, $Y$ is infinite in $(M,S)$.

By Lemma~\ref{lem:ordinals-in-rca} parts~2.~and 3., every infinite set contains an $\bbomega^{{n}c}$-large finite subset for each $n \in \omega$. It follows that $Y$ has an $\bbomega^{{n}c}$-large $M$-finite subset for each $n \in \omega$. 
By overspill, there exists an $M$-finite set $X\subseteq Y$ which is $\bbomega^{300^{d}c}$-large for some $d\in M\setminus \omega$.

Let $\{E_{i}\}_{i\in\omega}$ be an enumeration of all $M$-finite sets which are not $\bbomega^{c}$-large, and
$\{P_{i}\}_{i\in\omega}$ be an enumeration of all $M$-finite functions from $[[0,\max X]]^{2}$ 
to $2$.
We will construct an $\omega$-length sequence of $M$-finite sets 
$X=X_{0}\supseteq X_{1}\supseteq\dots$ such that for each~$i \in \omega$, the set $X_{i}$ is $\bbomega^{300^{d-i}c}$-large,
the colouring $P_{i}$ is constant on $[X_{2i+1}]^{2}$, and $[\min X_{2i+2},\max X_{2i+2})\cap E_{i}=\emptyset$.

To achieve this, we do the following for each $i\in\omega$.
At stage $2i+1$ of the construction, 
we take $X_{2i+1}\subseteq X_{2i}$ such that $P_{i}$ is constant on $[X_{2i+1}]^{2}$.
Assuming $X_{2i}$ was $\bbomega^{300^{d-2i}c}$-large, Corollary \ref{thm:RT22-largeness-main-formalized} lets us choose $X_{2i+1}$ so that it is 
$\bbomega^{300^{d-2i-1}c}$-large.
Then, at stage $2i+2$, consider the colouring $Q\colon[X_{2i+1}]^{2}\to 2$ such that $Q(x,y)=0$ if and only if $E_{i}\cap[x,y)=\emptyset$.
Again by Corollary~\ref{thm:RT22-largeness-main-formalized}, we take $X_{2i+2}\subseteq X_{2i+1}$ such that $Q$ is constant on $[X_{2i+2}]^{2}$ and $X_{2i+2}$ is $\bbomega^{300^{d-2i-2}c}$-large. $X_{2i+2}$ is in particular $(\bbomega^c+1)$-large, so if the colour of $Q$ on $[X_{2i+2}]^{2}$ was $1$, then by Lemma \ref{lem:alpha-large-subset} the set $E_i$ would be $\bbomega^c$-large.
Therefore, the colour of $Q$ on $[X_{2i+2}]^{2}$ must be $0$, which implies $[\min X_{2i+2},\max X_{2i+2})\cap E_{i}=\emptyset$.

Now, let $I=\sup\{\min X_{i}: i\in\omega\}\subseteq_{e} M$.
The even-numbered stages of our construction ensure that $I$ is a cut in $M$ 
and that $X_{j}\cap I$ is unbounded in $I$ for each $j\in\omega$ (consider the case where $E_i$ is a singleton set). 
They also ensure that that any set $E\in \Cod(M/I)$ which is unbounded in $I$ has an $\bbomega^{c}$-large subset.
To see this, assume $E$ has no $\bbomega^{c}$-large subset and take an $M$-finite set $\hat E$ such that $E=\hat E\cap I$.
By overspill, there exists $e\in M\setminus I$ such that $\hat E\cap [0,e]$ has no $\bbomega^{c}$-large subset, 
but then $\hat E\cap [0,e]=E_{i}$ for some $i\in\omega$ and so by construction $E=\hat E\cap I=E_{i}\cap I$ must be bounded in $I$.

It follows in particular that $I$ is a semi-regular cut---that is, for every $e \in I$, any $E\in \Cod(M/I)$ which is unbounded in $I$
has an $M$-finite subset with at least $e$ elements. By standard arguments, this implies $(I,\Cod(M/I))\models\WKLo$. Thus, by Lemma~\ref{lem:ordinals-in-rca} part~2., we also get $(I,\Cod(M/I))\models\WO(\bbomega^{c})$.

On the other hand, the odd-numbered stages ensure that $(I,\Cod(M/I))\models\RT^{2}_{2}$. To see this, let
$P:[I]^{2}\to 2$ be a function in $\Cod(M/I)$. Then $P=P_{i}\cap I$ for some $i\in\omega$.
Hence $P$ is constant on $[X_{2i+1}\cap I]^{2}$, and $X_{2i+1}\cap I\in\Cod(M/I)$ is an infinite set in $I$.

Finally, since $X\cap I$ is unbounded in $I$, so is $Y\cap I$. Thus, we have \[(I,\Cod(M/I))\models \A y\,\E z\,\neg\varphi_{0}((A\cap I)[z],a,y,z,\alpha),\] and hence $(I,\Cod(M/I))\models\neg\varphi(\alpha)$.
We have $(I,\Cod(M/I))\models\WO(\alpha)$ because $\alpha<\bbomega^{c}$.
Therefore, $\WKLo+\RT^{2}_{2}$ does not prove $\A \alpha\!<\!\bbomega^{\bbomega}\,(\WO(\alpha)\to \varphi(\alpha))$.
\end{proof}

The following consequence of Theorem \ref{thm:py-strong} states, intuitively speaking, that $\RT^{2}_{2}$ does not imply
any new closure properties of ordinals below $\omega^{\omega}$ compared to $\RCAo$.
\begin{corollary}
For any primitive recursive function $p\colon\omega^{\omega}\to\omega^{\omega}$ (defined on codes of ordinals), if $\RT^{2}_{2}+\WKLo$ proves 
\[ \A \alpha\!<\!\omega^{\omega}\,(\WO(\alpha)\to \WO(p(\alpha))),\]
 then $\RCAo$ proves the same statement.
\end{corollary}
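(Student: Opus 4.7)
The plan is to derive the corollary directly from Theorem~\ref{thm:py-strong} by recasting $\WO(p(\alpha))$ as a $\forall\Sigma^0_2$ predicate in the parameter $\alpha$. Once this syntactic observation is made, the result is immediate.

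First, I would verify that for any primitive recursive $p\colon\omega^\omega \to \omega^\omega$ (acting on codes of ordinals below $\omega^\omega$), the formula $\WO(p(\alpha))$ is $\forall\Sigma^0_2$ in $\alpha$. This rests on two standard observations. The graph of a primitive recursive function is $\Delta^0_0$ (with an appropriate coding) in $\RCAo$, so $y = p(\alpha)$ is an elementary relation; and for $\beta<\omega^\omega$ the assertion $\WO(\beta)$ admits a natural $\forall\Sigma^0_2$-formulation as $\forall X\,\exists n\,\psi(X\rest n,\beta)$, where $\psi$ is bounded and states that the finite sequence $X\rest n$ already witnesses the failure of $X$ to be an infinite strictly descending chain of ordinals $\le \beta$ (this is exactly why the authors remark just before Theorem~\ref{thm:py-strong} that $\WO(\alpha)$ is $\forall\Sigma^0_2$). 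Combining these observations, $\WO(p(\alpha))$ can be written as
\[ \forall X\,\exists n\,\exists y\,\bigl(y = p(\alpha) \wedge \psi(X\rest n, y)\bigr), \]
which is $\forall\Sigma^0_2$ in $\alpha$.

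Next, I would observe that the sentence $\A \alpha\!<\!\omega^{\omega}\,(\WO(\alpha)\to \WO(p(\alpha)))$ then fits exactly the template of Theorem~\ref{thm:py-strong} with $\varphi(\alpha) := \WO(p(\alpha))$. Applying Theorem~\ref{thm:py-strong} gives the desired conclusion: if $\WKLo+\RT^2_2$ proves the sentence, then so does $\RCAo$.

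The only potentially delicate point is the syntactic complexity calculation in the first step. However, because infinite descending chains below an ordinal $<\omega^\omega$ are just particular infinite sequences of natural numbers (which can be packaged as a single second-order object), and because $y = p(\alpha)$ is elementary when $p$ is primitive recursive, the calculation is entirely routine; the hard proof-theoretic work has already been done in establishing Theorem~\ref{thm:py-strong}.
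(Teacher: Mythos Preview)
Your approach is correct and is exactly the intended one: the paper presents the corollary without proof, simply as a direct consequence of Theorem~\ref{thm:py-strong}, and your derivation is precisely the routine syntactic check that $\WO(p(\alpha))$ is $\forall\Sigma^0_2$ in $\alpha$. One small quibble: the graph of an arbitrary primitive recursive function is $\Delta^0_1$ rather than $\Delta^0_0$ in $\RCAo$, but this does not affect your argument, since a $\Sigma^0_1$ description of $y = p(\alpha)$ still leaves $\exists n\,\exists y\,(y = p(\alpha)\wedge \psi(X\rest n,y))$ at the $\Sigma^0_1$ level, and hence $\WO(p(\alpha))$ remains $\forall\Sigma^0_2$.
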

As a special case, $\RT^{2}_{2}+\WKLo$ does not prove $\A x\,(\WO(\omega^{x})\to \WO(\omega^{2^{x}}))$, as this is not provable within $\RCAo$. (Note, though, that already the model constructed in the proof of Theorem~\ref{thm:py-strong} satisfies $\WO(\omega^{c}) \land \neg \WO(\omega^{2^c})$.)

Another strengthening of \cite[Theorem 7.4]{PY}\label{thm:py} -- in fact, the original motivation for Corollary \ref{thm:RT22-largeness-main-formalized} -- concerns proof lengths. Corollary \ref{thm:RT22-largeness-main-formalized} can be used to obtain the theorem below, which states that
$\WKLo + \RT^2_2$ has no significant proof speedup for proofs of $\forall\Sigma^0_2$ sentences over $\RCAo$. This answers Question 9.5 of \cite{PY} in the negative.

\begin{theorem}\label{thm:no-speedup}
There is a polynomial-time computable mapping which, given a proof $p$ of a $\forall\Sigma^0_2$ sentence $\varphi$ in $\WKLo + \RT^2_2$ as input, returns a proof $p'$ of $\varphi$ in $\RCAo$ as output. In particular, the size of $p'$ is at most polynomially larger than the size of $p$.
\end{theorem}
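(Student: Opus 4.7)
The plan is to effectivize the semi-regular-cut conservation argument from the proof of Theorem~\ref{thm:py-strong}, leveraging the polynomial bound $300n$ of Corollary~\ref{thm:RT22-largeness-main-formalized} to make each step of the translation cost only polynomially in $|p|$. First I would absorb $\WKLo$: the classical polynomial conservation of $\WKLo$ over $\RCAo$ for $\forall\Sigma^{0}_{2}$ formulas (a syntactic form of Harrington's forcing argument) reduces the problem to polynomial-time proof translation from $\RCAo+\RT^{2}_{2}$ to $\RCAo$.

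For the main task, I would design a forcing-style syntactic translation $\varphi \mapsto \hat\varphi$ whose conditions are the finite shrinking sequences of $\ome^{300^{d-i}c}$-large sets used in the proof of Theorem~\ref{thm:py-strong}. The translation should satisfy (i) $\RCAo \vdash p \Vdash \RT^{2}_{2}$ for every condition $p$, and (ii) $\RCAo \vdash (p \Vdash \varphi) \to \varphi$ for $\forall\Sigma^{0}_{2}$ sentences $\varphi$. Given a proof of $\varphi$ in $\RCAo+\RT^{2}_{2}$, the forcing translation yields a proof of $p \Vdash \varphi$ in $\RCAo$, and hence of $\varphi$. The size is controlled because each forcing step corresponds to a single application of Corollary~\ref{thm:RT22-largeness-main-formalized}, whose $\EFA$-proof has polynomial size, and ordinals below $\ome^{\ome}$ have Cantor-normal-form codes polynomial in their ``height'' (in particular, $\ome^{300^{|p|}c}$ has a code of size linear in $|p|$, even though it denotes a super-exponentially large ordinal).

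The hard part is verifying that the forcing of $\RT^{2}_{2}$---clause (i) above---is achievable with polynomial overhead, and that the bookkeeping relating the forcing stages to the uses of $\RT^{2}_{2}$ in $p$ does not itself introduce super-polynomial blow-up. This is essentially a Herbrand-style analysis: each occurrence of $\RT^{2}_{2}$ in the proof must be matched with a specific shrinking step in the forcing construction. To achieve polynomial control, the translation should be based on a sequent calculus in which $\RT^{2}_{2}$ is introduced by a uniform rule that can be discharged directly by Corollary~\ref{thm:RT22-largeness-main-formalized}, rather than via general cut-elimination, which would cause exponential blow-up. Without the explicit polynomial bound of Corollary~\ref{thm:RT22-largeness-main-formalized}, even a fully constructive version of the conservation argument would yield only super-polynomial translation, so that bound is what ultimately delivers the no-speedup result.
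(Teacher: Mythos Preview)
The paper does not actually prove Theorem~\ref{thm:no-speedup}. Immediately after stating it, the authors write that ``Proving Theorem~\ref{thm:no-speedup} requires a more extensive development of the logical framework. The proof will be provided in the forthcoming paper \cite{kwy:ramsey-proofsize}.'' So there is no proof in the paper to compare your proposal against.

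That said, your outline is in the right spirit and identifies the correct leverage point: the explicit linear bound $300n$ of Corollary~\ref{thm:RT22-largeness-main-formalized}, provable in $\EFA$, is precisely what should make a polynomial-time proof translation possible, and the idea of syntactically simulating the shrinking-set construction from the proof of Theorem~\ref{thm:py-strong} is natural. However, what you have written is a plan rather than a proof, and you yourself flag the genuine difficulty: arranging a proof system in which each use of $\RT^{2}_{2}$ can be discharged by a single appeal to Corollary~\ref{thm:RT22-largeness-main-formalized} without incurring cut-elimination blow-up, and controlling the bookkeeping when instances of $\RT^{2}_{2}$ are nested or share variables. The claim that one can simply ``match each occurrence of $\RT^{2}_{2}$ with a shrinking step'' hides real work --- in the model-theoretic argument the colourings are handled one at a time against a fixed infinite set, but in a proof $p$ the colourings may depend on earlier homogeneous sets, and a na\"{\i}ve Herbrand expansion of such dependencies is exactly where exponential blow-up tends to appear. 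The authors' remark that a ``more extensive development'' is needed should be read as confirmation that these issues require substantial additional machinery beyond what is in the present paper.
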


Proving Theorem \ref{thm:no-speedup} requires a more extensive development of the logical framework. The proof will be provided in the forthcoming paper \cite{kwy:ramsey-proofsize}.

\bibliographystyle{plain}
\bibliography{bib}

\begin{thebibliography}{10}

\bibitem{BK99}
Teresa Bigorajska and Henryk Kotlarski.
\newblock A partition theorem for $\alpha$-large sets.
\newblock {\em Fund.~Math.}, 160(1):27--37, 1999.

\bibitem{BK02}
Teresa Bigorajska and Henryk Kotlarski.
\newblock Some combinatorics involving $\xi$-large sets.
\newblock {\em Fund.~Math.}, 175(2):119--125, 2002.

\bibitem{BK06}
Teresa Bigorajska and Henryk Kotlarski.
\newblock Partitioning $\alpha$-large sets: some lower bounds.
\newblock {\em Trans.~Amer.~Math.~Soc.}, 358(11):4981--5001, 2006.

\bibitem{BW}
Andrey Bovykin and Andreas Weiermann.
\newblock The strength of infinitary {R}amseyan principles can be accessed by
  their densities.
\newblock {\em Ann. Pure Appl. Logic}, 168(9):1700--1709, 2017.

\bibitem{CJS}
Peter~A. Cholak, Carl~G. Jockusch, and Theodore~A. Slaman.
\newblock On the strength of \text{Ramsey's} theorem for pairs.
\newblock {\em The Journal of Symbolic Logic}, 66(1):1--15, 2001.

\bibitem{CSY2014}
C.~T. Chong, Theodore~A. Slaman, and Yue Yang.
\newblock The metamathematics of {S}table {R}amsey's {T}heorem for {P}airs.
\newblock {\em J. Amer. Math. Soc.}, 27(3):863--892, 2014.

\bibitem{CSY2017}
C.~T. Chong, Theodore~A. Slaman, and Yue Yang.
\newblock The inductive strength of {R}amsey's {T}heorem for {P}airs.
\newblock {\em Adv. Math.}, 308:121--141, 2017.

\bibitem{Ha-Pu}
Petr H\'ajek and Pavel Pudl\'ak.
\newblock {\em Metamathematics of First-Order Arithmetic}.
\newblock Springer-Verlag, Berlin, 1993.
\newblock XIV+460 pages.

\bibitem{HS2007}
Denis~R. Hirschfeldt and Richard~A. Shore.
\newblock Combinatorial principles weaker than {R}amsey{'}s theorem for pairs.
\newblock {\em J. Symb. Log.}, 72:171--206, 2007.

\bibitem{KS81}
Jussi Ketonen and Robert Solovay.
\newblock Rapidly growing {R}amsey functions.
\newblock {\em Ann. of Math. (2)}, 113(2):267--314, 1981.

\bibitem{kwy:ramsey-proofsize}
Leszek~Aleksander Ko{\l}odziejczyk, Tin~Lok Wong, and Keita Yokoyama.
\newblock Ramsey's theorem for pairs, collection, and proof size.
\newblock In preparation.

\bibitem{KPW}
Henryk Kotlarski, Bo\.{z}ena Piekart, and Andreas Weiermann.
\newblock More on lower bounds for partitioning $\alpha$-large sets.
\newblock {\em Ann. Pure Appl. Logic}, 147:113--126, 2007.

\bibitem{Lerman2013Separating}
Manuel Lerman, Reed Solomon, and Henry Towsner.
\newblock {Separating principles below {R}amsey's theorem for pairs}.
\newblock {\em J. Math. Log.}, 13(02):1350007, 2013.

\bibitem{montalban:questions}
A.~Montalb\'an.
\newblock Open questions in reverse mathematics.
\newblock {\em Bull. Symb. Log.}, 17(3):431--454, 2011.

\bibitem{PY}
Ludovic Patey and Keita Yokoyama.
\newblock The proof-theoretic strength of {R}amsey's theorem for pairs and two
  colors.
\newblock {\em Adv. Math.}, 330:1034--1070, 2018.

\bibitem{Pelupessy-unpublished}
Florian Pelupessy.
\newblock On $\alpha$-largeness and the {P}aris-{H}arrington principle in {RCA}
  and {RCA}$_{0}^{*}$, 2016.
\newblock Available at \url{https://arxiv.org/abs/1611.08988}.

\bibitem{Seetapun1995strength}
David Seetapun and Theodore~A. Slaman.
\newblock {On the strength of {R}amsey's theorem}.
\newblock {\em Notre Dame Journal of Formal Logic}, 36(4):570--582, 1995.

\bibitem{SOSOA}
Stephen~G. Simpson.
\newblock {\em Subsystems of {S}econd {O}rder {A}rithmetic}.
\newblock Perspectives in Mathematical Logic. Springer-Verlag, 1999.
\newblock XIV + 445 pages; Second Edition, Perspectives in Logic, Association
  for Symbolic Logic, Cambridge University Press, 2009, XVI+ 444 pages.

\bibitem{DW}
Michiel~De Smet and Andreas Weiermann.
\newblock Partitioning $\alpha$-large sets for $\alpha<\varepsilon_{\omega}$.
\newblock Available at \url{http://arxiv.org/abs/1001.2437}.

\bibitem{Weiermann2004}
Andreas Weiermann.
\newblock A classification of rapidly growing {R}amsey functions.
\newblock {\em Proc. Amer. Math. Soc.}, 132(2):553--561, 2004.

\end{thebibliography}

\end{document}